%
\documentclass[runningheads]{llncs}
\usepackage[T1]{fontenc}

%
\usepackage{graphicx}
\usepackage{breakcites}
\usepackage{amsfonts}
\usepackage{amsmath,amssymb}
\usepackage{algorithm}
\usepackage{algpseudocode}
\usepackage{pifont}
\usepackage{float}
\usepackage{graphicx}
\usepackage{caption}
\usepackage{subcaption}
\usepackage{blindtext}
\usepackage{xcolor}
\usepackage{booktabs}
\allowdisplaybreaks

\usepackage{colortbl}
\usepackage{color}
\usepackage{multirow}
\usepackage{pifont}
\usepackage[many]{tcolorbox}

\usepackage[font=small]{caption}
\usepackage[font=small]{subcaption}

\usepackage[algo2e]{algorithm2e} 

\newcommand{\redx}{\color{red} \ding{55}}
\definecolor{PineGreen}{HTML}{008B72}
\newcommand{\greencheck}{\color{PineGreen}\ding{51}}

\newcommand{\circledOne}{\text{\ding{172}}}
\newcommand{\circledTwo}{\text{\ding{173}}}
\newcommand{\circledThree}{\text{\ding{174}}}
\newcommand{\circledFour}{\text{\ding{175}}}

\newcommand{\dotprod}[2]{\left\langle #1,#2 \right\rangle}
\newcommand{\norms}[1]{\left\| #1 \right\|}
\newcommand{\expect}[1]{\mathbb{E}\left[ #1 \right]}

\renewcommand{\gg}{\mathbf{g}}
\newcommand{\bb}{\mathbf{b}}
\providecommand{\ee}{\mathbf{e}}
\providecommand{\uu}{\mathbf{u}}

\newcommand{\Obound}[1]{\mathcal{O}\left( #1 \right)}
\newcommand{\OboundTilde}[1]{\tilde{\mathcal{O}}\left( #1 \right)}

\usepackage[many]{tcolorbox}


\definecolor{main}{HTML}{5989cf}    
\definecolor{sub}{HTML}{cde4ff}     

\tcbset{
    sharp corners,
    colback = white,
    before skip = 0.2cm,    
    after skip = 0.5cm      
}                           

\newtcolorbox{boxA}{
    fontupper = \bf,
    boxrule = 1.5pt,
    colframe = black 
}

\newtcolorbox{boxB}{
    fontupper = \bf\color{main}, 
    boxrule = 1.5pt,
    colframe = main,
    rounded corners,
    arc = 5pt   
}

\newtcolorbox{boxC}{
    colback = sub, 
    boxrule = 0pt  
}

\newtcolorbox{boxD}{
    colback = white, 
    colframe = black, 
    boxrule = 0pt, 
    toprule = 3pt, 
    bottomrule = 3pt 
}

\newtcolorbox{boxE}{
    enhanced, 
    boxrule = 0pt, 
    borderline = {0.75pt}{0pt}{main}, 
    borderline = {0.75pt}{2pt}{sub} 
}

\newtcolorbox{boxF}{
    colback = sub,
    enhanced,
    boxrule = 1.5pt, 
    colframe = white, 
    borderline = {1.5pt}{0pt}{main, dashed} 
}

\newtcolorbox{boxAA}{
    fontupper = \bf,
    colback = white,
    enhanced,
    boxrule = 1.5pt,
    borderline = {1.5pt}{0pt}{white, dashed}
}

\newtcolorbox{boxG}{
    enhanced,
    boxrule = 0pt,
    colback = white,
    borderline west = {1pt}{0pt}{black}, 
    borderline west = {0.75pt}{2pt}{black}, 
    borderline east = {1pt}{0pt}{black}, 
    borderline east = {0.75pt}{2pt}{black}
}

\newtcolorbox{boxH}{
    colback = sub, 
    colframe = main, 
    boxrule = 0pt, 
    leftrule = 6pt 
}

\newtcolorbox{boxI}{
    colback = sub, 
    colframe = main, 
    boxrule = 0pt, 
    toprule = 6pt 
}

\newtcolorbox{boxJ}{
    sharpish corners, 
    colback = sub, 
    colframe = main, 
    boxrule = 0pt, 
    toprule = 4.5pt, 
    enhanced,
    fuzzy shadow = {0pt}{-2pt}{-0.5pt}{0.5pt}{black!35} 
}

\newtcolorbox{boxK}{
    sharpish corners, 
    boxrule = 0pt,
    toprule = 4.5pt, 
    enhanced,
    fuzzy shadow = {0pt}{-2pt}{-0.5pt}{0.5pt}{black!35} 
}

\newtcolorbox{boxL}{
    fontupper = \color{main},
    rounded corners,
    arc = 6pt,
    colback = sub, 
    colframe = main!50, 
    boxrule = 0pt, 
    bottomrule = 4.5pt 
}

\newtcolorbox{boxM}{
    fontupper = \color{white},
    rounded corners,
    arc = 6pt,
    colback = main!80, 
    colframe = main, 
    boxrule = 0pt, 
    bottomrule = 4.5pt,
    enhanced,
    fuzzy shadow = {0pt}{-3pt}{-0.5pt}{0.5pt}{black!35}
}

\newcommand{\agk}{\alpha_{k}}
\newcommand{\bgk}{\beta_{k}}
\newcommand{\ak}{a_{k}}
\newcommand{\bk}{b_{k}}
\newcommand{\akk}{a_{k+1}}
\newcommand{\bkk}{b_{k+1}}
\newcommand{\gk}{\gamma_{k}}

\newcommand{\xk}{x_{k}}

\newcommand{\xkk}{x_{k+1}}
\newcommand{\xopt}{x^{*}}

\newcommand{\vk}{z_{k}}
\newcommand{\yk}{y_{k}}

\newcommand{\vkk}{z_{k+1}}

\newcommand{\rk}{r_{k}}
\newcommand{\rkk}{r_{k+1}}

\newcommand{\normsq}[1]{\left\|#1\right\|^{2}}

\newcommand{\E}{\mathbb{E}}

\newcommand{\grad}[1]{\nabla f(#1)}

%
\usepackage[breaklinks=true,colorlinks=true,linkcolor=red, citecolor=blue, urlcolor=blue]{hyperref}%
\usepackage{xcolor}

%
\begin{document}
\title{Improved Iteration Complexity in Black-Box Optimization Problems under Higher Order Smoothness Function Condition}
\titlerunning{Improved Iteration Complexity in Black-Box Optimization Problems}
%
\author{Aleksandr Lobanov\inst{1,2,3}\orcidID{0000-0003-1620-9581}}
\authorrunning{A. Lobanov}
%
\institute{Moscow Institute of Physics and Technology, Dolgoprudny, Russia
 \and 
 Skolkovo Institute of Science and Technology, Moscow, Russia
 \and
 ISP RAS Research Center for Trusted Artificial Intelligence, Moscow, Russia \\
\email{lobanov.av@mipt.ru}}

\maketitle             
\begin{abstract}
This paper is devoted to the study (common in many applications) of the black-box optimization problem, where the black-box represents a gradient-free oracle $\tilde{f} = f(x) + \xi$ providing the objective function value with some stochastic noise. Assuming that the objective function is $\mu$-strongly convex, and also not just $L$-smooth, but has a higher order of smoothness ($\beta \geq 2$) we provide a novel optimization method: Zero-Order Accelerated Batched Stochastic Gradient Descent, whose theoretical analysis closes the question regarding the iteration complexity, achieving optimal estimates. Moreover, we provide a thorough analysis of the maximum noise level, and show under which condition the maximum noise level will take into account information about batch size $B$ as well as information about the smoothness order of the function $\beta$.   

\keywords{Black-box optimization \and Higher order smoothness function \and Strongly convex optimization \and Maximum noise level.}
\end{abstract}
\section{Introduction}
This paper focuses on solving a standard optimization problem:
\begin{equation}
    \label{eq:init_problem}
    f^* := \min_{x \in Q \subseteq \mathbb{R}^d} f(x),
\end{equation}
where $f: Q \rightarrow \mathbb{R}$ is function that we want to minimize, $f^*$ is the solution, which we want to find. It is known that if there are no obstacles to compute the gradient of the objective function $f$ or to compute a higher order of the derivative of the function, then optimal first- or higher-order optimizations algorithms \cite{Nesterov_2003} should be used to solve the original optimization problem~\eqref{eq:init_problem}. However, if computing the function gradient $\nabla f(x)$ is impossible for any reason, then perhaps the only way to solve the original problem is to use gradient-free (zero-order) optimization algorithms \cite{Conn_2009, Rios_2013}. Among the situations in which information about the derivatives of the objective function is unavailable are the following:
\begin{itemize}
    \item[a)] \textit{non-smoothness of the objective function}. This situation is probably the most widespread among theoretical works \cite{Gasnikov_2022_ICML, Alashqar_2023, Kornilov_NIPS};

    \item[b)] \textit{the desire to save computational resources}, i.e., computing the gradient $\nabla f(x)$ can sometimes be much "more expensive" than computing the objective function value $f(x)$. This situation is quite popular and extremely understandable in the real world \cite{Bogolubsky_2016};

    \item[c)] \textit{inaccessibility of the function gradient}. A vivid example of this situation is the problem of creating an ideal product for a particular person \cite{Lobanov_2024}.
\end{itemize}

Like first-order optimization algorithms, gradient-free algorithms have the following optimality criteria: $\#N$ -- the number of consecutive iterations required to achieve the desired accuracy of the solution $\varepsilon$ and $\#T$ -- the total number of calls (in this case) to the gradient-free oracle, where by gradient-free/derivative-free oracle we mean that we have access only to the objective function $f(x)$ with some bounded stochastic noise $\xi$ ($\expect{\xi^2} \leq \Delta^2$). It should be noted that because the objective function is subject to noise, the gradient-free oracle plays the role of a black box. That is why there is a tendency in the literature when the initial problem formulation \eqref{eq:init_problem} with a gradient-free oracle is called a \textit{black-box optimization problem} \cite{Kimiaei_2022}. However, unlike higher-order algorithms, gradient-free algorithms have a third optimality criterion: the maximum noise level $\Delta$ at which the algorithm will still converge " good", where by "good convergence" we mean convergence as in the case when $\Delta = 0$. The existence of such a seemingly unusual criterion can be explained by the following motivational examples (see Figure~\ref{fig:Motivation}\footnote{The pictures are taken from the \href{https://ru.freepik.com/}{following resource}}). Among the motivations we can highlight the most demanded especially by companies (and not only). \textit{Resource saving} (Figure~\ref{fig:Resource saving}): The more accurately the objective function value is calculated, the more expensive this process to be performed. \textit{Robustness to Attacks} (Figure~\ref{fig:Robustness to attacks}): Improving the maximum noise level makes the algorithm more robust to adversarial attacks. \textit{Confidentiality} (Figire~\ref{fig:Confidentiality}): Some companies, due to secrecy, can't hand over all the information. Therefore, it is important to be able to answer the following question:
\fbox{How much can the objective function be noisy?}
\begin{figure}
    \centering
    \begin{subfigure}[b]{0.3\textwidth}
        \centering
        \includegraphics[width=1.0\linewidth]{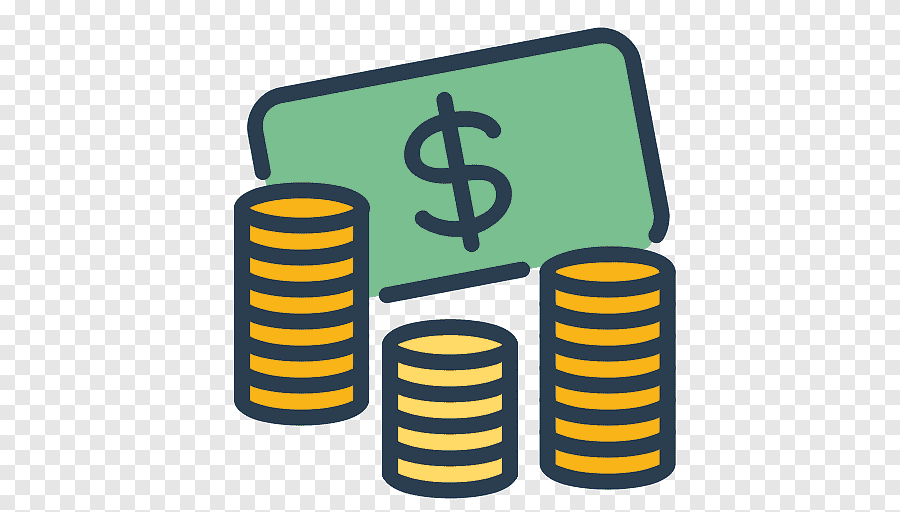}
        \caption{Resource saving}
        \label{fig:Resource saving}
    \end{subfigure}
    ~ 
    \begin{subfigure}[b]{0.3\textwidth}
        \includegraphics[width=0.9\linewidth]{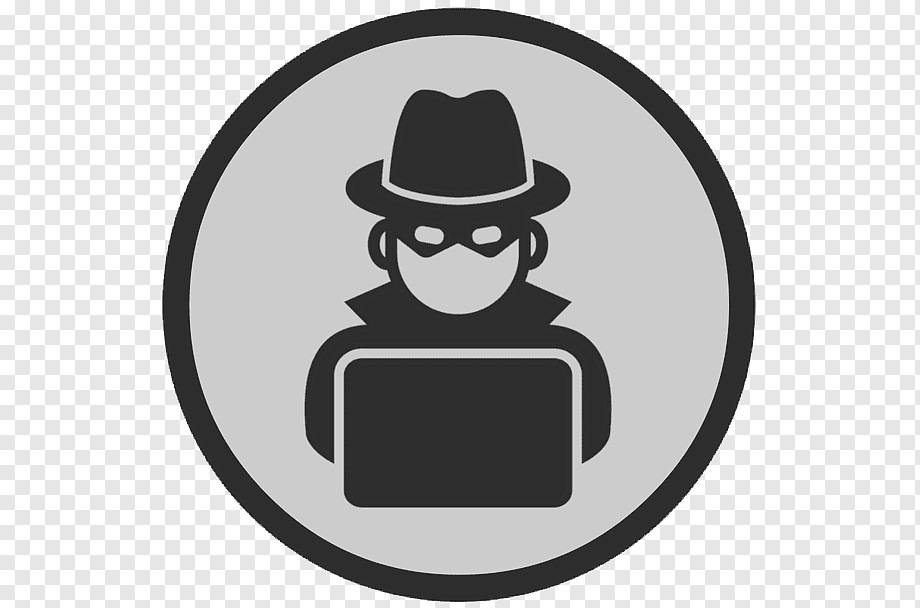}
        \caption{Robustness to attacks}
        \label{fig:Robustness to attacks}
    \end{subfigure}
    ~ 
    \begin{subfigure}[b]{0.3\textwidth}
        \includegraphics[width=0.9\linewidth]{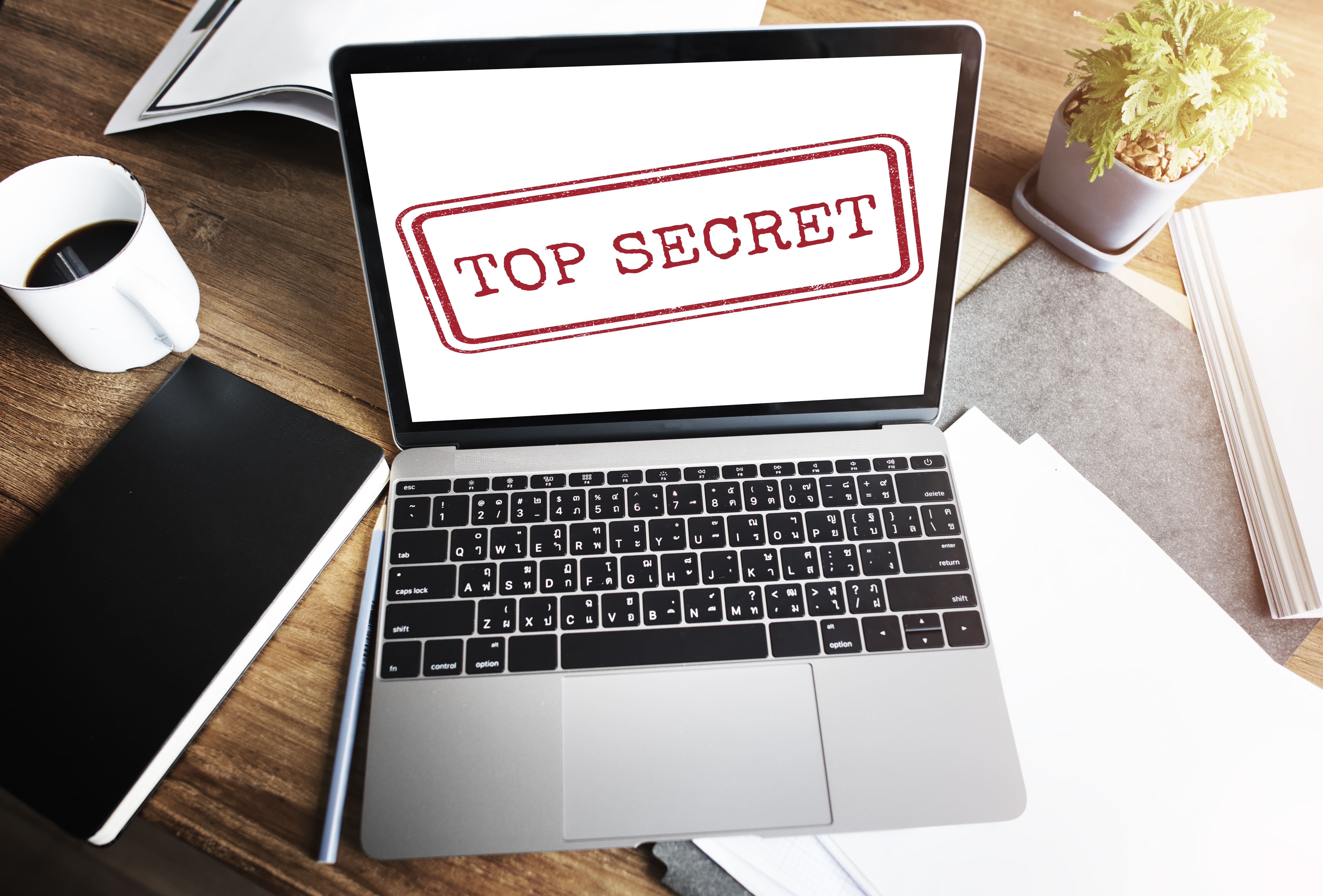}
        \caption{Confidentiality}
        \label{fig:Confidentiality}
    \end{subfigure}
    \caption{Motivation to find the maximum noise level $\Delta$}\label{fig:Motivation}
\end{figure}  

The basic idea to create algorithms with a gradient-free oracle that will be efficient according to the above three criteria is to take advantage of first-order algorithms by substituting a gradient approximation instead of the true gradient~\cite{Gasnikov_Encyclopedia}. The choice of the first-order optimization algorithm depends on the formulation of the original problem (on the Assumptions on the function and the gradient oracle). But the choice of gradient approximation depends on the smoothness of the function. For example, if the function is non-smooth, a smoothing scheme with $l_1$ randomization \cite{Alashqar_2023, Lobanov_Noise} or with $l_2$ randomization \cite{Dvinskikh_2022, Lobanov_MOTOR, Lobanov_decentralized} should be used to solve the original problem. If the function is smooth, it is enough to use choose $l_1$ randomization \cite{Akhavan_2022} or $l_2$ randomization \cite{Gorbunov_2018, Lobanov_OPTIMA}. But if the objective function is not just smooth but also has a higher order of smoothness ($\beta \geq 2$), then the so-called Kernel approximation  \cite{Akhavan_2023, Gasnikov_Highly, Lobanov_Overbatching}, which takes into account the information about the increased smoothness of the function using two-point feedback, should be used as the gradient approximation.

In this paper, we consider the black-box optimization problem~\eqref{eq:init_problem}, assuming strong convexity as well as increased smoothness of the objective function. We choose accelerated stochastic gradient descent \cite{Vaswani_2019} as the basis for a gradient-free algorithm. Since the Kernel approximation (which accounts for the advantages of increased smoothness) is biased, we generalize the result of \cite{Vaswani_2019} to the biased gradient oracle. We use the resulting accelerated stochastic gradient descent with a biased gradient oracle to create a gradient-free algorithm. Finally, we explicitly derive estimates on the three optimality criteria of the gradient-free algorithm.

    \subsection{Main Assumptions and Notations} 
    Since the original problem \eqref{eq:init_problem} is general, in this subsection we further define the problem by imposing constraints on the objective function as well as the zero-order oracle. In particular, we assume that the function $f$ is not just $L$-smooth, but has increased smoothness, and is also $\mu$-strongly convex.

    \begin{assumption}[Higher order smoothness]
    \label{ass:Higher_order_smoothness}
        Let $l$ denote maximal integer number strictly less than~$\beta$. Let $\mathcal{F}_\beta(L)$ denote the set of all functions $f: \mathbb{R}^d \rightarrow \mathbb{R}$ which are differentiable $l$ times and $\forall x,z \in Q$ the H\"{o}lder-type~condition:
        \begin{equation*}
            \left| f(z) - \sum_{0 \leq |n| \leq l} \frac{1}{n!} D^n f(x) (z-x)^n \right| \leq L_\beta \norms{z - x}^\beta,
        \end{equation*}
        where $l < \beta$ ($\beta$ is smoothness order), $L_\beta>0$, the sum is over multi-index $n~=~(n_1, ..., n_d) \in \mathbb{N}^d$, we used the notation $n!~=~n_1! \cdots n_d!$, $|n| = n_1 + \cdots + n_d$, $\forall v = (v_1, ..., v_d) \in \mathbb{R}^d$, and we defined  ${D^n f(x) v^n = \frac{\partial ^{|n|} f(x)}{\partial^{n_1}x_1 \cdots \partial^{n_d}x_d} v_1^{n_1} \cdots v_d^{n_d}.}$
    \end{assumption}

    \begin{assumption}[Strongly convex]
    \label{ass:Strongly_convex}
        Function $f: \mathbb{R}^d \rightarrow \mathbb{R}$ is $\mu$-strongly convex with some constant $\mu > 0$ if for any $x,y \in \mathbb{R}^d$ it holds that
        \begin{equation*}
             f(y) \geq f(x) + \dotprod{\nabla f(x)}{y - x} + \frac{\mu}{2} \norms{y-x}^2.
        \end{equation*}
    \end{assumption}
    Assumption~\ref{ass:Higher_order_smoothness} is commonly appeared in papers \cite{Bach_2016,Akhavan_2023}, which consider the case when the objective function has smoothness order $\beta \geq 2$. It is worth noting that the smoothness constant $L_\beta$ in the case when $\beta = 2$ has the following relation with the standard Lipschitz gradient constant $L = 2 \cdot L_2$. In addition, Assumption~\ref{ass:Strongly_convex} is standard among optimization works \cite{Nesterov_2003, Stich_2019}.
    \newpage

    In this paper, we assume that Algorithm~\ref{algorithm} (which will be introduced later) only has access to the zero-order oracle, which has the following definition.

    \begin{definition}[Zero-order oracle]
    \label{def:zero_order_oracle}
        The zero-order oracle $\tilde{f}$ returns only the objective function value $f(x_k)$ at the requested point $x_k$ with stochastic noise $\xi$:
        \begin{equation*}
            \tilde{f} = f(x_k) + \xi,
        \end{equation*}
        where we suppose that the following assumptions on stochastic noise hold
        \begin{itemize}
            \item[$\bullet$] $\xi_1 \neq \xi_2$ such that $\mathbb{E}[\xi_1^2] \leq \Delta^2$ and $\expect{\xi_2^2} \leq \Delta^2$, where $\Delta \geq 0$ is level noise;

            \item[$\bullet$] the random variables $\xi_1$ and $\xi_2$ are independent from $\ee$ and $r$.
        \end{itemize}
    \end{definition}

    We impose constraints on the Kernel function which is used in Algorithm~\ref{algorithm}.

    \begin{assumption}[Kernel function]
    \label{ass:Kernel_function}
        Let function ${K:[-1,1] \rightarrow \mathbb{R}}$ satisfying:
        \begin{gather*}
            \mathbb{E}[K(u)] = 0, \; \mathbb{E}[u K(u)] = 1, \\
            \mathbb{E}[u^j K(u)] = 0, \; j=2,...,l, \; \mathbb{E}[|u|^\beta |K(u)|] < \infty.
        \end{gather*}
    \end{assumption}

    Definition~\ref{def:zero_order_oracle} is common among gradient-free works \cite{Lobanov_Noise}. In particular, a zero-order oracle will produce the exact function value when the noise level is $0$. We would also like to point out that we relaxed the restriction on stochastic noise by not assuming a zero mean. We only need the assumption that the random variables $\xi_1$ and $\xi_2$ are independent from $\ee$ and $r$. Assumption~\ref{ass:Kernel_function} is often found in papers using the gradient approximation -- the Kernel approximation. An example of such a function is the weighted sums of Lejandre polynomial~\cite{Bach_2016}.
    \paragraph{Notation.} We use $\dotprod{x}{y}:= \sum_{i=1}^{d} x_i y_i$ to denote standard inner product of ${x,y \in \mathbb{R}^d}$, where $x_i$ and $y_i$ are the $i$-th component of $x$ and $y$ respectively. We denote Euclidean norm in~$\mathbb{R}^d$ as ${\norms{x} := \sqrt{\dotprod{x}{x}}}$. We use the notation $B^d(r):=\left\{ x \in \mathbb{R}^d : \| x \| \leq r \right\}$ to denote~Euclidean~ball,  $S^d(r):=\left\{ x \in \mathbb{R}^d : \| x \| = r \right\}$ to denote Euclidean sphere. Operator $\mathbb{E}[\cdot]$ denotes full expectation.

    \subsection{Related Works and Our Contributions}
    \begin{table*}
        \begin{minipage}{\textwidth}
        \caption{Overview of convergence results of previous works. Notations: $d = $ dimensionality of the problem \eqref{eq:init_problem}; $\beta = $ smoothness order of the objective function $f$; $\mu = $ strong convexity constant; $\varepsilon = $ desired accuracy of the problem solution by function.}
        \label{tab:table_compare} 
        \centering
         \resizebox{\linewidth}{!}{\begin{tabular}{ ||c  c  c  c  c||  }
             \hline 
             \hline 
             References & & Iteration Complexity & & Maximum Noise Level  \\
             \midrule
            Bach, Perchet (2016) \cite{Bach_2016} & & $\mathcal{O} \left( \frac{d^{2+ \frac{2}{\beta - 1}} \Delta^2}{\mu \varepsilon^{\frac{\beta + 1}{\beta - 1}}} \right)$ & & \redx\\
    
            Akhavan, Pontil, Tsybakov (2020) \cite{Akhavan_2020} & & $\tilde{\mathcal{O}} \left( \frac{d^{2+ \frac{2}{\beta - 1}} \Delta^2}{\left(\mu \varepsilon\right)^{\frac{\beta}{\beta - 1}}} \right)$ & & \redx\\
    
            Novitskii, Gasnikov (2021) \cite{Novitskii_2021} & & $\tilde{\mathcal{O}} \left( \frac{d^{2+ \frac{1}{\beta - 1}} \Delta^2}{\left(\mu \varepsilon\right)^{\frac{\beta}{\beta - 1}}} \right)$ & & \redx\\
    
             Akhavan, Chzhen, Pontil, Tsybakov (2023) \cite{Akhavan_2023} & & $\tilde{\mathcal{O}} \left( \frac{d^2 \Delta^2}{\left(\mu \varepsilon\right)^{\frac{\beta}{\beta - 1}}} \right)$ & & \redx\\
    
             \rowcolor[HTML]{b8e0ff} \textbf{Theorem~\ref{th:main_result} (Our work)} & & $\mathcal{O} \left( \sqrt{\frac{L}{\mu}} \log \frac{1}{\varepsilon} \right)$ & & \greencheck\\
            \hline
            \hline
        \end{tabular}}
        \end{minipage}
        \vspace{-1em}
    \end{table*}

    In Table \ref{tab:table_compare}, we provide an overview of the convergence results of the most related works, in particular we provide estimates on the iteration complexity. Research studying the problem \eqref{eq:init_problem} with a zero-order oracle (see Definition~\ref{def:zero_order_oracle}), assuming that the function $f$ has increased smoothness ($\beta \geq 2$, see Assumption~\ref{ass:Higher_order_smoothness}) comes from~\cite{Polyak_1990}. After 20-30 years, this problem has received widespread attention. However, as we can see, all previous works "fought" (improved/considered) exclusively for oracle complexity (which matches the iteration complexity), without paying attention to other optimality criteria of the gradient-free algorithm. In this paper, we ask another question: Is estimation on iteration complexity unimprovable? And as we can see from Table~\ref{tab:table_compare} or Theorem~\ref{th:main_result}, we significantly improve the iteration complexity without worsening the oracle complexity, and also provide the best estimates among those we have seen on maximum~noise~level.
    \vspace{2em}

    \underline{More specifically, \textbf{our contributions} are the following:}

    \begin{itemize}
        \item[$\bullet$] We provide a detailed explanation of the technique for creating a gradient-free algorithm that takes advantage of the increased smoothness of the function via the Kernel approximation. 

        \item[$\bullet$] We generalize existing convergence results for accelerated stochastic gradient descent to the case where the gradient oracle is biased, thereby demonstrating how bias accumulates in the convergence of the algorithm. This result may be of independent interest.

        \item[$\bullet$] We close the question regarding the iteration complexity search by providing an improved estimate (see Table~\ref{tab:table_compare}) that is optimal among $L_2$ smooth strongly convex optimization methods.

        \item[$\bullet$] We find the maximum noise level $\Delta$ at which the algorithm will still achieve the desired accuracy $\varepsilon$ (see Table~\ref{tab:table_compare} and Theorem~\ref{th:main_result}). Moreover, we show that if overbatching is done, the positive effect on the error floor is preserved even in a strongly convex problem formulation.
    \end{itemize}
    \vspace{-1.5em}

    \paragraph{Paper Organization}
    This paper has the following structure. In Section~\ref{sec:Search for First-Order Algorithm as a Base}, we present a first-order algorithm on the basis of which a novel gradient-free algorithm will be created. And in Section~\ref{sec:Zero-Order Accelerated Batched SGD} we provide the main result of this paper, namely the convergence results of the novel accelerated gradient-free optimization algorithm. While Section~\ref{sec:Conclusion} concludes this paper. The missing proofs of the paper are presented in the supplementary material (Appendix).
    \vspace{-0.5em}

\section{Search for First-Order Algorithm as a Base}\label{sec:Search for First-Order Algorithm as a Base}
\vspace{-0.5em}
As mentioned earlier, the basic idea of creating a gradient-free algorithm is to take advantage of first-order algorithms. That is, in this subsection, we find the first-order algorithm on which we will base to create a novel gradient-free algorithm by replacing the true gradient with a gradient approximation. Since gradient approximations use randomization on the sphere $\ee$ (e.g., $l_1$, $l_2$ randomization, or Kernel approximation), it is important to look for a first-order algorithm that solves a stochastic optimization problem (due to the artificial stochasticity of $\ee$). Furthermore, since the gradient approximation from a zero-order oracle concept has a bias, it is also important to find a first-order algorithm that will use a biased gradient oracle. Using these criteria, we formulate an optimization problem to find the most appropriate first-order algorithm.
\vspace{-1em}

    \subsection{Statement Problem}
    \label{subsec:Statement Problem}
    Due to the presence of artificial stochasticity in the gradient approximation, we reformulate the original optimization problem as follows:

    \begin{equation}
        \label{eq:stoch_problem}
        f^* = \min_{x \in Q \subseteq \mathbb{R}^d} \left\{ f(x) := \expect{f(x, \xi)} \right\}.
    \end{equation}

    We assume that the function satisfies the $L$-smoothness assumption, since it is a basic assumption in papers on first-order optimization algorithms.

    \begin{assumption}[$L$-smooth]\label{ass:L_smooth}
        Function $f$ is $L$-smooth if it~holds ${\forall x,y \in \mathbb{R}^d}$
        \begin{equation*}
            f(y) \leq f(x) + \dotprod{\nabla f(x)}{y - x} + \frac{L}{2} \norms{y - x}^2.
        \end{equation*}
    \end{assumption}

    Next, we define a biased gradient oracle that uses a first-order algorithm.
    \begin{definition}[Biased Gradient Oracle]
    \label{def:gradient_oracle}
        A map $\mathbf{g}~:~\mathbb{R}^d~\times~\mathcal{D} \rightarrow \mathbb{R}^d$ s.t.
        \begin{equation*}\label{eq:Definition_1}
            \mathbf{g}(x,\xi) = \nabla f(x, \xi) + \mathbf{b}(x)
        \end{equation*}
        for a bias $\mathbf{b}: \mathbb{R}^d \rightarrow \mathbb{R}^d$ and unbiased stochastic gradient $\expect{\nabla f(x, \xi)}=\nabla f(x)$.
    \end{definition}
    We assume that the bias and gradient noise are bounded.
    \begin{assumption}[Bounded bias]
    \label{Ass:Bounded_bias}
        There exists constant ${\delta \geq 0}$ such that $\forall x \in \mathbb{R}^d$ the following inequality holds
        \begin{equation}\label{eq:Bounded_bias}
            \norms{\mathbf{b}(x)} = \norms{\expect{\gg(x,\xi)} - \nabla f(x)}\leq  \delta.
        \end{equation}
    \end{assumption}
    \begin{assumption}[Bounded noise]
    \label{Ass:Gradient_noise}
        There exists constants $\rho, \sigma^2 \geq 0$ such that the more general condition of strong growth is satisfied $\forall x \in \mathbb{R}^d$
        \begin{equation}\label{eq:Gradient_noise}
            \expect{\norms{\gg(x,\xi)}^2} \leq \rho \norms{\nabla f(x)}^2 + \sigma^2.
        \end{equation}
    \end{assumption}

    Assumption \ref{Ass:Bounded_bias} is standard for analysis, bounding bias. Assumption \ref{Ass:Gradient_noise} is a more general condition for strong growth due to the presence of $\sigma^2$.

    \subsection{First-Order Algorithm as a Base}
    Now that the problem is formally defined (see Subsection \ref{subsec:Statement Problem}), we can find an appropriate first-order algorithm. Since one of the main goals of this research is to improve the iteration complexity, we have to look for a accelerated batched first-order optimization algorithm. And the most appropriate optimization algorithm has the following convergence rate presented in Lemma \ref{lem:background}.

    \begin{lemma}[\cite{Vaswani_2019}, Theorem 1] \label{lem:background}
        Let the function $f$ satisfy Assumption~\ref{ass:Strongly_convex} and \ref{ass:L_smooth}, and the gradient oracle (see Definition~\ref{def:gradient_oracle} with $\delta = 0$) satisfy Assumptions~\ref{Ass:Bounded_bias} and \ref{Ass:Gradient_noise}, then with $\tilde{\rho} = \max\{1, \rho\}$ and with the chosen parameters $\gamma_{k}, a_{k + 1}, \alpha_k, \eta$ the Accelerated Stochastic Gradient Descent has the following convergence rate: 
        \begin{equation*}
            \expect{f(x_N)} - f^* \leq \left( 1 - \sqrt{\frac{\mu}{\tilde{\rho}^2 L}} \right)^N \left[ f(x_0) - f^* + \frac{\mu}{2} \norms{x_0 - x^*}^2 \right] + \frac{\sigma^2}{\sqrt{\tilde{\rho}^2 \mu L}}.
        \end{equation*}
    \end{lemma}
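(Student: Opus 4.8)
The plan is to run a Lyapunov (potential-function) analysis adapted to the three-sequence structure of accelerated SGD. Writing the method with an extrapolation point $y_k = \alpha_k z_k + (1-\alpha_k)x_k$, a stochastic gradient step $x_{k+1} = y_k - \eta\, \gg(y_k,\xi_k)$, and a coupled ``mirror'' update for $z_{k+1}$, I would track the potential
\[
    \Phi_k := \left( f(x_k) - f^* \right) + \tfrac{\mu}{2}\norms{z_k - x^*}^2 ,
\]
chosen precisely so that $\Phi_0$ reproduces the bracketed quantity in the statement (taking $z_0 = x_0$). The target is a one-step contraction $\expect{\Phi_{k+1} \mid \mathcal{F}_k} \leq (1-q)\Phi_k + C$ with $q = \sqrt{\mu/(\tilde{\rho}^2 L)}$ and additive constant $C = \sigma^2/(\tilde{\rho}^2 L)$. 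Unrolling over $k = 0,\dots,N-1$ and summing the geometric series then finishes the proof, since $\sum_{j\geq 0}(1-q)^j C = C/q = \sigma^2/\sqrt{\tilde{\rho}^2\mu L}$, which is exactly the claimed error floor.

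The heart of the argument is the per-iteration estimate. First I would apply $L$-smoothness (Assumption~\ref{ass:L_smooth}) across the gradient step,
\[
    f(x_{k+1}) \leq f(y_k) - \eta\dotprod{\nabla f(y_k)}{\gg(y_k,\xi_k)} + \tfrac{L\eta^2}{2}\norms{\gg(y_k,\xi_k)}^2 ,
\]
and take conditional expectation. Unbiasedness $\expect{\gg(y_k,\xi_k)} = \nabla f(y_k)$ (Definition~\ref{def:gradient_oracle} with $\delta=0$) turns the cross term into $-\eta\norms{\nabla f(y_k)}^2$, while the strong-growth condition (Assumption~\ref{Ass:Gradient_noise}) bounds the second moment by $\rho\norms{\nabla f(y_k)}^2 + \sigma^2$. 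This is precisely where $\sigma^2$ feeds the additive constant $C$ and where $\rho$ degrades the descent: a stepsize $\eta \asymp 1/(\tilde{\rho} L)$ keeps the net coefficient of $\norms{\nabla f(y_k)}^2$ negative, so real progress survives the noise. In parallel I would expand the $z$-update into the standard three-point mirror inequality and convert it, via strong convexity (Assumption~\ref{ass:Strongly_convex}), into a contraction of $\norms{z_k - x^*}^2$, then add the two estimates.

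The remaining work is to balance everything through the parameters $\gamma_k, a_{k+1}, \alpha_k, \eta$: they must be chosen so that the gradient-correlation terms produced by the smoothness step and by the mirror step cancel (the classical Nesterov cancellation), leaving only the contracted potential plus the noise constant. This coupling is what forces the factor $1 - \sqrt{\mu/(\tilde{\rho}^2 L)}$ rather than $1 - \mu/(\tilde{\rho}L)$ — the square root being the signature of acceleration — and it is what replaces the usual condition number $L/\mu$ by the inflated $\tilde{\rho}^2 L/\mu$, since the strong-growth constant $\rho$ effectively scales the smoothness felt by the stochastic gradient. Taking full expectation and iterating the one-step bound then yields the stated rate.

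The main obstacle is the simultaneous, self-consistent tuning of these four parameters. In contrast to deterministic accelerated gradient descent, here the variance couples to $\norms{\nabla f(y_k)}^2$ through $\rho$, so stepsize and momentum cannot be set independently of the noise model: the stepsize must shrink by a factor of order $\tilde{\rho}$ to prevent the quadratic gradient term from swamping the descent, and that shrinkage has to propagate consistently into $\alpha_k$ and $\gamma_k$ for the cross terms to still cancel. Verifying that one self-consistent choice simultaneously delivers the contraction $q$, keeps the additive term at $O(\sigma^2/(\tilde{\rho}^2 L))$, and ensures the telescoped potential leaks no extra $\norms{\nabla f}^2$ mass is the delicate bookkeeping the proof must get exactly right; the rest is routine use of Assumptions~\ref{ass:Strongly_convex}, \ref{ass:L_smooth}, and~\ref{Ass:Gradient_noise}.
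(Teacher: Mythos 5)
Your plan is correct and follows essentially the same route as the paper's argument (given in the appendix for the generalized Theorem~\ref{th:First_Order}, of which this lemma is the $\delta=0$, $B=1$ special case): a smoothness descent lemma on the $x$-step combined with a strong-convexity contraction of $\norms{z_k - x^*}^2$, cross terms cancelled by the coupled choice of $\gamma_k,\alpha_k,\beta_k,\eta$, then a geometric unrolling giving the error floor $\sigma^2/\sqrt{\tilde{\rho}^2\mu L}$. The only cosmetic difference is that you phrase it as a one-step contraction of the fixed potential $f(x_k)-f^*+\tfrac{\mu}{2}\norms{z_k-x^*}^2$, whereas the paper telescopes the weighted quantity $4a_k^2\left[\E f(x_k)-f^*\right]+b_k^2\norms{z_k-x^*}^2$ with geometrically growing weights satisfying $b_k^2 = 2\mu a_k^2$ — which is the same thing.
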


    As can be seen from Lemma \ref{lem:background}, that the presented First Order Accelerated Algorithm is not appropriate for creating a gradient-free algorithm, since this algorithm uses an unbiased gradient oracle, and also does not use the batching technique. Therefore, we are ready to present one of the significant results of this work, namely generalizing the results of Lemma~\ref{lem:background} to the case with an biased gradient oracle and also adding batching (where $B$ is a batch size).

    \begin{theorem}\label{th:First_Order}
        Let the function $f$ satisfy Assumption~\ref{ass:Strongly_convex} and \ref{ass:L_smooth}, and the gradient oracle (see Definition~\ref{def:gradient_oracle}) satisfy Assumptions~\ref{Ass:Bounded_bias} and \ref{Ass:Gradient_noise}, then with $\tilde{\rho}_B = \max\{1, \frac{\rho}{B}\}$ and with the chosen parameters ${\gamma_{k} = \frac{1}{\sqrt{2\mu \eta \rho}}}$, ${\beta_k = 1 - \frac{\mu \eta}{2 \rho}}$, ${b_{k+1} = \frac{\sqrt{2 \mu}}{\left( 1 - \sqrt{\frac{\mu \eta}{2 \rho}} \right)^{(k+1)/2}}}$, ${a_{k + 1} = \frac{1}{\left( 1 - \sqrt{\frac{\mu \eta}{2 \rho}} \right)^{(k+1)/2}}}$, ${\alpha_k = \frac{\gamma_k \beta_k b_{k+1}^2 \eta}{\gamma_k \beta_k b_{k+1}^2 \eta + 2 a_k^2}}$, ${\eta \leq \frac{1}{2 \rho L}}$ the Accelerated Stochastic Gradient Descent with batching has the following convergence rate: 
        \begin{align*}
            \expect{f(x_N)} - f^*
            & \leq \left( 1 - \sqrt{\frac{\mu}{\tilde{\rho}_B^2 L}} \right)^N \left[ f(x_0) - f^* + \frac{\mu}{2} \norms{x_0 - x^*}^2 \right] + \frac{\sigma^2}{\sqrt{\tilde{\rho}_B^2 \mu L B^2}}\\
            & \quad + \left( 1 - \sqrt{\frac{\mu}{\tilde{\rho}_B^2 L}} \right)^N \Tilde{R} \delta + \frac{\delta^2}{\sqrt{4 \mu L}}.
        \end{align*}
    \end{theorem}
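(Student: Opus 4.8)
The plan is to extend the Lyapunov (potential-function) argument underlying Lemma~\ref{lem:background} so that it simultaneously accommodates the batched gradient estimate and the non-zero bias $\bb$ permitted by Definition~\ref{def:gradient_oracle}. Writing the batched oracle as $\bar{\gg}_k = \frac{1}{B}\sum_{i=1}^{B}\gg(\yk,\xi_i)$, I would first record its first two conditional moments given $\yk$: averaging preserves the bias, $\expect{\bar{\gg}_k}=\grad{\yk}+\bb(\yk)$, while independence of the samples shrinks the variance, turning the strong-growth bound of Assumption~\ref{Ass:Gradient_noise} into $\expect{\normsq{\bar{\gg}_k}}\le\bigl(1+\tfrac{\rho-1}{B}\bigr)\normsq{\grad{\yk}}+\tfrac{\sigma^2}{B}$. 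This is precisely what replaces $\rho$ by the effective constant $\tilde{\rho}_B=\max\{1,\rho/B\}$ and $\sigma^2$ by $\sigma^2/B$ throughout the subsequent estimates.

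With the moment bounds in hand, I would define the potential $\Phi_k=a_k^2\bigl(f(\xk)-f^*\bigr)+\tfrac{b_k^2}{2}\normsq{\vk-\xopt}$ with the stated weights $a_k,b_k$ (chosen so that $a_{k+1}^2/a_k^2=b_{k+1}^2/b_k^2=\bigl(1-\sqrt{\mu\eta/(2\rho)}\bigr)^{-1}$) and run the accelerated one-step analysis: $L$-smoothness (Assumption~\ref{ass:L_smooth}) to bound $f(\xkk)$ from the descent step, $\mu$-strong convexity (Assumption~\ref{ass:Strongly_convex}) to convert the mirror-descent inner product on $\vk$ into progress, and the coupling identity tying $\yk$ to $\xk,\vk$ through $\alpha_k$. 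The decisive step is to substitute $\bar{\gg}_k=\grad{\yk}+\bigl(\bar{\gg}_k-\expect{\bar{\gg}_k}\bigr)+\bb(\yk)$ and take conditional expectation: the centered term contributes the variance $\sigma^2/B$, whereas the bias $\bb(\yk)$ appears in two inner products, one paired with $\vk-\xopt$ and one inside the squared-step term. Bounding these with $\norms{\bb}\le\delta$ (Assumption~\ref{Ass:Bounded_bias}) and Young's inequality splits the bias contribution into a linear-in-$\delta$ piece and a quadratic-in-$\delta$ piece, giving a one-step recursion of the schematic form $\expect{\Phi_{k+1}}\le\expect{\Phi_k}+V_k\tfrac{\sigma^2}{B}+P_k\,\delta+Q_k\,\delta^2$, where $V_k$ and $Q_k$ inherit the geometric growth $\propto\bigl(1-\sqrt{\mu\eta/(2\rho)}\bigr)^{-k}$ of the weights, while $P_k$ grows only at the half rate $\propto\bigl(1-\sqrt{\mu\eta/(2\rho)}\bigr)^{-k/2}$ because it is tied to $b_{k+1}$ rather than to $b_{k+1}^2$.

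Summing from $k=0$ to $N-1$ and dividing by $a_N^2=\bigl(1-\sqrt{\mu\eta/(2\rho)}\bigr)^{-N}$ then produces the four advertised terms. The initial potential $\Phi_0/a_N^2$ yields the contracting term carrying $f(x_0)-f^*+\tfrac{\mu}{2}\normsq{x_0-\xopt}$. The $V_k$ and $Q_k$ sums are geometric with the same ratio as $a_N^2$, so after division they collapse to the non-vanishing floors $\frac{\sigma^2}{\sqrt{\tilde{\rho}_B^2\mu L B^2}}$ and $\frac{\delta^2}{\sqrt{4\mu L}}$, the batch dependence entering through $\tilde{\rho}_B$ together with the $1/B$ variance reduction. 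By contrast the $P_k$ sum grows only like $\bigl(1-\sqrt{\mu\eta/(2\rho)}\bigr)^{-N/2}$; divided by the full $a_N^2$ it leaves a surviving factor $\propto\bigl(1-\sqrt{\mu/(\tilde{\rho}_B^2 L)}\bigr)^{N}$, whose constant—together with a uniform bound on $\norms{\vk-\xopt}$ read off from the potential itself—I would absorb into $\tilde{R}$ to obtain the contracting bias term $\bigl(1-\sqrt{\mu/(\tilde{\rho}_B^2 L)}\bigr)^N\tilde{R}\,\delta$.

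The main obstacle is the coupled bookkeeping of the bias cross-terms: the Young's-inequality split must be calibrated so that the quadratic residue does not overwhelm the per-step contraction supplied by strong convexity, while simultaneously the linear remainder stays uniformly bounded so that $\tilde{R}$ is well defined. Reconciling this split with the exact accelerated parameter choices $\gamma_k,\beta_k,a_{k+1},b_{k+1},\alpha_k$—and in particular verifying that the differing geometric rates of $b_{k+1}$ versus $b_{k+1}^2$ produce the asymmetry between a contracting linear term and a saturating quadratic floor—is the delicate crux of the argument; the remaining manipulations reduce to the routine telescoping and geometric-series estimates above.
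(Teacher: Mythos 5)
Your overall strategy---rerun the potential-function argument of Vaswani et al.\ with the weights $a_k,b_k$, track the bias through the two inner products, split it via Young's inequality into a linear-in-$\delta$ and a quadratic-in-$\delta$ piece, then telescope and sum geometric series---is exactly the paper's route: the paper likewise gets the $\delta^2/\sqrt{4\mu L}$ floor from the Fenchel--Young residue $\frac{\eta}{2}\norms{\expect{\gg_k}-\nabla f(y_k)}^2\le\frac{\eta}{2}\delta^2$ and the $\sigma^2$ floor from the geometric sum of the $a_{k+1}^2$-weighted variance terms. Your treatment of batching is actually \emph{more} careful than the paper's, which never computes the moments of the averaged estimator and simply substitutes $\rho\to\tilde{\rho}_B$, $\sigma^2\to\sigma^2/B$ in the last line; note only that your constant $1+\frac{\rho-1}{B}$ is not literally $\max\{1,\rho/B\}$, though it is within a factor of $2$ of it.

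The genuine gap is in your handling of the linear-in-$\delta$ term. By your own bookkeeping $P_k\propto b_{k+1}\propto(1-q)^{-(k+1)/2}$ with $q=\sqrt{\mu\eta/(2\rho)}$, so $\sum_{k=0}^{N-1}P_k\propto(1-q)^{-N/2}/q$, and dividing by $a_N^2=(1-q)^{-N}$ leaves a factor $(1-q)^{N/2}/q$---\emph{not} the claimed $\bigl(1-\sqrt{\mu/(\tilde{\rho}_B^2L)}\bigr)^N$: you lose half the exponent and pick up a factor $1/q\sim\rho\sqrt{L/\mu}$ that has to be absorbed somewhere. Moreover, bounding $\norms{\vk-\xopt}$ ``from the potential itself'' puts $\sqrt{\Phi_k}$ on the right-hand side, so the recursion is no longer a linear telescoping and needs an induction you have not supplied. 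The paper sidesteps all of this with a different device: it keeps full-rate coefficients and bounds the two bias inner products as $4a_k^2\dotprod{\bb}{x_k-x^*}+4a_{k+1}^2\dotprod{\bb}{x^*-y_k}\le 4a_k^2\delta\tilde{R}-4a_{k+1}^2\delta\tilde{R}$ with $\tilde{R}=\max_k\{\norms{x_k-x^*},\norms{y_k-x^*}\}$, which telescopes to $4a_0^2\delta\tilde{R}$ and, after division by $a_N^2$, gives the contracting term $(1-q)^N\tilde{R}\delta$ exactly as stated. (Whether that step is itself airtight is a separate question---the second inner product is bounded above by a \emph{negative} quantity, which Cauchy--Schwarz does not justify---but it is the step your argument must replace, and your replacement as written does not deliver the rate in the theorem.)
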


    As can be seen from Theorem~\ref{th:First_Order}, this result is very similar to the result of Lemma~\ref{lem:background}, moreover, they will be the same if we take $\delta = 0$ and $B = 1$. It is also worth noting that the third summand does not affect convergence much, so we will not consider it in the future for simplicity. Finally, it is worth noting that the Algorithm presented in \cite{Vaswani_2019} can converge as closely as possible to the problem solution (see the red line in Figure~\ref{fig:Bias influence on the algorithm convergence}), while the Algorithm using the biased gradient oracle can only converge to the error floor (see the blue line in Figure~\ref{fig:Bias influence on the algorithm convergence}). This is explained by the last summand from Theorem~\ref{th:First_Order}. However, convergence to the error floor opens questions about how this asymptote can be controlled. And as shown in \cite{Lobanov_Overbatching}, the convergence of gradient-free algorithms to the asymptote depends directly on the noise level: the more noise, the better the algorithm can achieve the error floor. This fact is another clear motivation for finding the maximum noise level. For a detailed proof of Theorem \ref{th:First_Order}, see the supplementary materials (Appendix~\ref{app:Proof}).

    \begin{figure}
    \centering
    
    \begin{subfigure}[b]{0.45\textwidth}
    \begin{boxAA}
        \centering
        \includegraphics[width=0.85\linewidth]{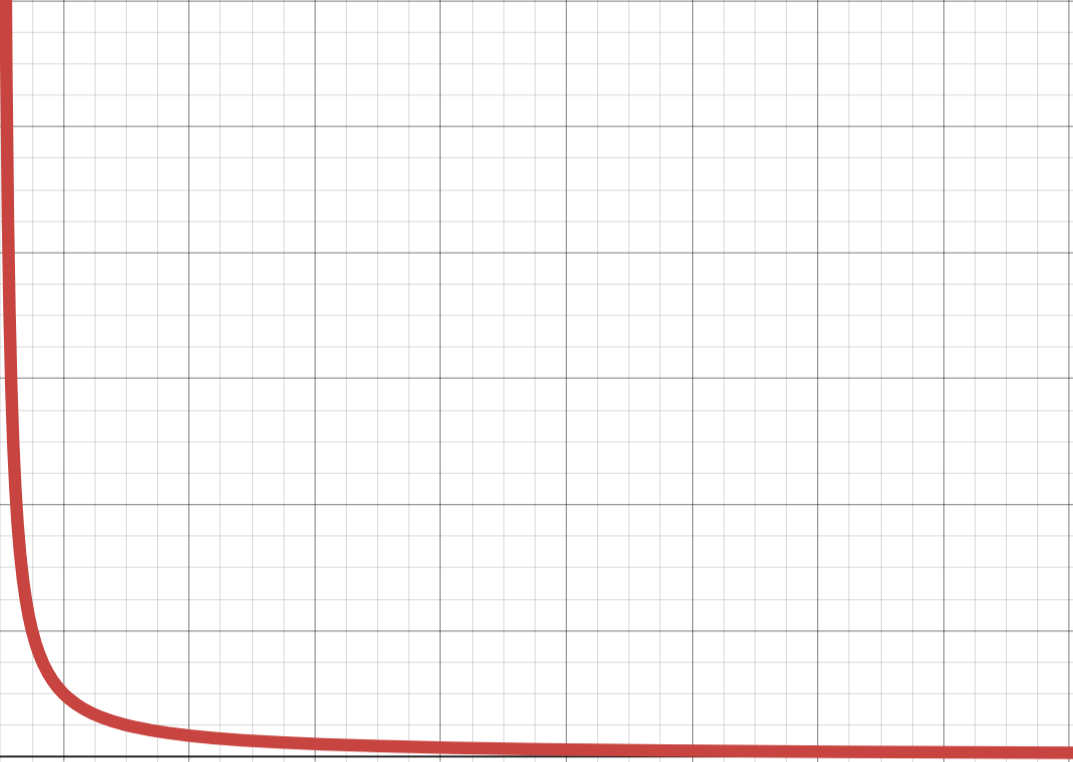}
        \caption{Case without bias}
        \label{fig:Case without bias}
        \end{boxAA}
    \end{subfigure}
    \begin{subfigure}[b]{0.45\textwidth}
    \begin{boxAA}
       \includegraphics[width=0.85\linewidth]{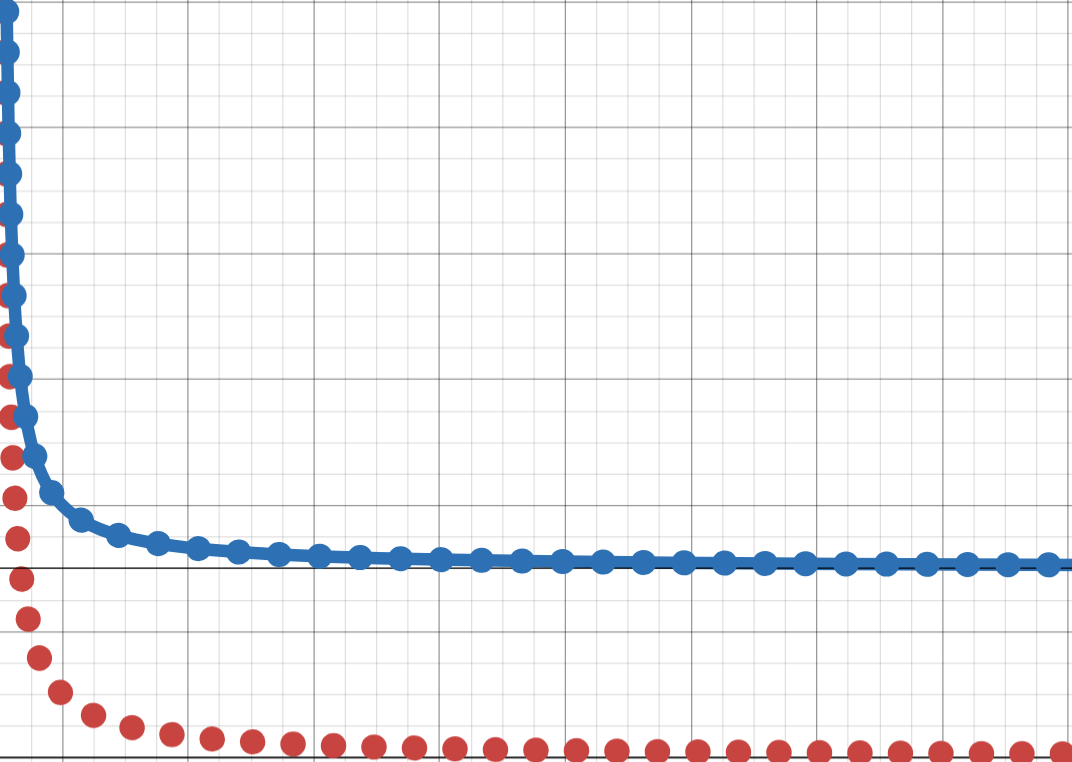}
        \caption{Case with bias}
        \label{fig:Case with bias}
    \end{boxAA}
    \end{subfigure}
    
    \caption{Bias influence on the algorithm convergence}\label{fig:Bias influence on the algorithm convergence}
\end{figure} 
\vspace{-2em}
\section{Zero-Order Accelerated Batched SGD}
\label{sec:Zero-Order Accelerated Batched SGD}
Now that we have a proper first-order algorithm, we can move on to creating a novel gradient-free algorithm. To do this, we need to use the gradient approximation instead of the gradient oracle. In this work, we are going to use exactly the Kernel approximation because it takes into account the advantages of increased smoothness of the function, and which has the following \begin{equation}\label{eq:gradient_approx}
    \gg(x,\ee) = d \frac{f(x+ h r \ee) + \xi_1 - f(x - h r \ee) - \xi_2}{2 h} K(r) \ee,
\end{equation}
where $h>0$ is a smoothing parameter, $\ee \in S^d(1)$ is~a~random~vector uniformly distributed on the Euclidean unit sphere, $r$~is~a~vector uniformly distributed on the interval $r \in [0,1]$, $K:~[-1,1]~\rightarrow~\mathbb{R}$ is a Kernel function. Then a novel gradient-free method aimed at solving the original problem \eqref{eq:init_problem} is presented in Algorithm \ref{algorithm}. The missing hyperparameters are given in the Theorem \ref{th:First_Order}.
\vspace{-1em}
 \begin{algorithm}[H]
       \caption{Zero-Order Accelerated Batched Stochastic Gradient Descent}
       \label{algorithm}
        \begin{algorithmic}
           \State {\bfseries Input:} iteration number $N$, batch size $B$, Kernel ${K: [-1, 1] \rightarrow \mathbb{R}}$, step size $\eta$, smoothing parameter $h$, $x_0=y_0=z_0~\in~\mathbb{R}^d$, $a_0 = 1$, $\rho = 4d\kappa$.
           
           \For{$k=0$ {\bfseries to} $N-1$}{
           \State  {1.} ~~~Sample vectors $\ee_1, \ee_2 ..., \ee_B \in S^d(1)$ and scalars $r_1, r_2, ..., r_B \in [-1,1]$ 
           \State  \textcolor{white}{1.} ~~~independently
           \State  {2.} ~~~Calculate $\gg_k = \frac{1}{B} \sum_{i=1}^B \gg (x_k,\ee_i)$ via \eqref{eq:gradient_approx}
           \State {3.} ~~~$ y_{k} \gets \alpha_{k}  z_{k}  + (1-\alpha_{k}) x_{k}$
           \State  {4.} ~~~$x_{k+1} \gets y_k - \eta \gg_k$ 
           \State {5.} ~~~$z_{k+1} \gets  \beta_k z_k  + (1 - \beta_k) y_k - \gamma_k \eta \gg_k$}
           
           \State {\bfseries Return:} $x_{N}$
        \end{algorithmic}
    \end{algorithm}

    Now, in order to obtain an estimate of the convergence rate of Algorithm~\ref{algorithm}, we need to evaluate the bias as well as the second moment of the gradient approximation \eqref{eq:gradient_approx}. Let's start with the bias of the gradient approximation:
    \paragraph{Bias of gradient approximation}
    Using the variational representation of the Euclidean norm, and definition of gradient approximation \eqref{eq:gradient_approx} we can write:
    \begin{align}
        \|\expect{\gg(x_k,\ee)}& - \nabla f(x_k)\|  \nonumber\\
        &= \norms{\frac{d}{2 h} \expect{ \left( \Tilde{f}(x_k + h r \ee) - \Tilde{f}(x_k - h r \ee) \right) K(r) \ee } - \nabla f(x_k)}
        \nonumber \\
        & \overset{\circledOne}{=} \norms{\frac{d}{h}\expect{ f(x_k + h r \ee) K(r) \ee} - \nabla f(x_k)}
        \nonumber \\
        & \overset{\circledTwo}{=}   \norms{\expect{\nabla f(x_k + h r \uu) r K(r)} - \nabla f(x_k)}
        \nonumber \\
        & = \sup_{z \in S_2^d(1)} \expect{\left( \nabla_z f(x_k + h r \uu) - \nabla_z f(x_k)\right) r K(r)}
        \nonumber \\
        &\!\!\! \! \! \! \! \! \overset{\circledThree, \circledFour}{\leq} \kappa_\beta h^{\beta-1}\frac{L}{(l-1)!} \expect{\norms{u}^{\beta - 1}}
        \nonumber \\
        & \leq \kappa_\beta h^{\beta-1}\frac{L}{(l-1)!} \frac{d}{d+\beta - 1}
        \nonumber \\
        & \lesssim \kappa_\beta L h^{\beta - 1}, \label{eq:proof_bias}
    \end{align}
    where $u \in B^d(1)$; $\circledOne =$ the equality is obtained from the fact, namely, distribution of $e$ is symmetric' $\circledTwo =$ the equality is obtained from a version of Stokes’ theorem~\cite{Zorich_2016}; $\circledThree =$ Taylor expansion (see Appendix for more detail); $\circledFour =$ assumption that $|R(h r \uu)| \leq \frac{L}{(l-1)!}  \norms{h r \uu}^{\beta - 1} = \frac{L}{(l-1)!}|r|^{\beta - 1} h^{\beta - 1} \norms{\uu}^{\beta - 1}$.

    Now we find an estimate of the second moment of the gradient approximation~\eqref{eq:gradient_approx}.

    \paragraph{Bounding second moment of gradient approximation} By definition gradient approximation \eqref{eq:gradient_approx} and Wirtinger-Poincare inequality we have
    \begin{align}
        &\expect{\norms{\gg(x_k,\ee)}^2} = \frac{d^2}{4 h^2} \expect{\norms{\left(\Tilde{f}(x_k + h r \ee) - \Tilde{f}(x_k - h r \ee)\right) K(r) \ee}^2}
        \nonumber \\
         & \quad \quad = \frac{d^2}{4 h^2} \expect{\left(f(x_k + h r \ee) - f(x_k - h r \ee) + (\xi_1 - \xi_2))\right)^2 K^2(r)} 
        \nonumber \\
         & \quad \quad \leq \frac{\kappa d^2}{2 h^2} \left( \expect{\left(f(x_k + h r \ee) - f(x_k - h r \ee)\right)^2} + 2 \Delta^2 \right)
        \nonumber \\
         & \quad \quad \leq \frac{\kappa d^2}{2 h^2} \left( \frac{h^2}{d} \expect{\norms{ \nabla f(x_k + h r \ee) + \nabla f(x_k - h r \ee)}^2} + 2 \Delta^2 \right) 
        \nonumber \\
        & \quad  \quad  = \frac{\kappa d^2}{2 h^2} \left( \frac{h^2}{d} \expect{\norms{ \nabla f(x_k + h r \ee) + \nabla f(x_k - h r \ee) \pm 2 \nabla f(x_k)}^2} + 2 \Delta^2 \right)
        \nonumber \\
        & \quad \quad  \leq \underbrace{4d \kappa}_{\rho} \norms{\nabla f(x_k)}^2 + \underbrace{4 d \kappa  L^2 h^2 + \frac{\kappa d^2 \Delta^2}{h^2}}_{\sigma^2}. \label{eq:proof_variance}
    \end{align}

Now substituting into Theorem~\ref{th:First_Order} instead of $\delta \rightarrow \kappa_\beta L h^{\beta - 1}$ from \eqref{eq:proof_bias}, $\rho \rightarrow 4 d \kappa$ from \eqref{eq:proof_variance} and $\sigma^2 \rightarrow 4 d \kappa L^2 h^2 + \frac{\kappa d^2 \Delta^2}{h^2}$ from \eqref{eq:proof_variance}, we obtain convergence for the novel gradient-free method (see Algorithm~\ref{algorithm}) with $\rho_B = \max\{ 1, \frac{4d \kappa}{B}\}$:
\begin{align*}
    \expect{f(x_N)} - f^*
    & \leq \underbrace{\left( 1 - \sqrt{\frac{\mu}{\rho_B^2 L}} \right)^N \left[ f(x_0) - f^* + \frac{\mu}{2} \norms{x_0 - x^*}^2 \right]}_{\circledOne} + \underbrace{\frac{4 d \kappa L^2 h^2}{\sqrt{\rho_B^2 \mu L B^2}}}_{\circledTwo}\\
    & \quad + \underbrace{\frac{\kappa d^2 \Delta^2}{h^2 \sqrt{\rho_B^2 \mu L B^2}}}_{\circledThree} + \underbrace{\frac{\kappa_\beta^2 L^2 h^{2(\beta - 1)}}{\sqrt{4 \mu L}}}_{\circledFour}.
\end{align*}

We are now ready to present the main result of this paper.

\begin{theorem}\label{th:main_result}
Let the function $f$ satisfy Assumptions~\ref{ass:Higher_order_smoothness} and \ref{ass:Strongly_convex}, and let the Kernel approximation with zero-order oracle (see Definition\ref{def:zero_order_oracle}) satisfy Assumptions~\ref{ass:Kernel_function} and \ref{Ass:Bounded_bias}--\ref{Ass:Gradient_noise}, then the novel Zero-Order Accelerated Batched Stochastic Gradient Descent (see Algorithm \ref{algorithm}) converges to the desired accuracy $\varepsilon$ at the following parameters
\begin{itemize}
    \item[Case] $B = 1$: with smoothing parameter $h \lesssim \varepsilon^{1/2} \mu^{1/4}$, after $N = \Obound{\sqrt{\frac{d^2L}{\mu}} \log \frac{1}{\varepsilon}}$ successive iterations, $T = N \cdot B = \Obound{\sqrt{\frac{d^2L}{\mu}} \log \frac{1}{\varepsilon}}$ oracle calls and at ${\Delta \lesssim \frac{\varepsilon\mu^{1/2}}{\sqrt{d}}}$ maximum noise level. 

    \item[Case] $1 < B < 4d\kappa$: with parameter $h \lesssim \varepsilon^{1/2} \mu^{1/4}$, after $N = \Obound{\sqrt{\frac{d^2L}{B^2 \mu}} \log \frac{1}{\varepsilon}}$ successive iterations, $T = N \cdot B = \Obound{\sqrt{\frac{d^2L}{\mu}} \log \frac{1}{\varepsilon}}$ oracle calls and at ${\Delta \lesssim \frac{\varepsilon\mu^{1/2}}{\sqrt{d}}}$ maximum noise level. 

    \item[Case] $B = 4d\kappa$: with smoothing parameter $h \lesssim \varepsilon^{1/2} \mu^{1/4}$, after $N = \Obound{\sqrt{\frac{L}{\mu}} \log \frac{1}{\varepsilon}}$ successive iterations, $T = N \cdot B = \Obound{\sqrt{\frac{d^2L}{\mu}} \log \frac{1}{\varepsilon}}$ oracle calls and at ${\Delta \lesssim \frac{\varepsilon\mu^{1/2}}{\sqrt{d}}}$ maximum noise level. 

    \item[Case] $B > 4d\kappa$: with parameter $h \lesssim \left(\varepsilon \sqrt{\mu}\right)^{\frac{1}{2(\beta - 1)}}$, after $N = \Obound{\sqrt{\frac{L}{\mu}} \log \frac{1}{\varepsilon}}$ successive iterations, $T = N \cdot B = \max \{ \OboundTilde{\sqrt{\frac{d^2L}{\mu}}}, \OboundTilde{\frac{d^2 \Delta^2}{(\varepsilon \mu)^{\frac{\beta}{\beta - 1}}}} \}$ oracle calls and at ${\Delta \lesssim \frac{(\varepsilon \sqrt{\mu})^{\frac{\beta}{2(\beta - 1)}}}{d} B^{1/2}}$ maximum noise level. 
\end{itemize}
\end{theorem}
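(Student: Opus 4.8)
The plan is to treat the four-term upper bound displayed immediately before the theorem (obtained by substituting $\delta \to \kappa_\beta L h^{\beta-1}$, $\rho \to 4d\kappa$ and $\sigma^2 \to 4d\kappa L^2 h^2 + \kappa d^2 \Delta^2/h^2$ into Theorem~\ref{th:First_Order}) as the master inequality, and to impose that each of the four summands be at most $\varepsilon$ up to constants, so that $\expect{f(x_N)} - f^* \lesssim \varepsilon$. Writing $\rho_B = \max\{1, \frac{4d\kappa}{B}\}$, I would first record its value in each batch regime: $\rho_B = 4d\kappa$ when $B=1$, $\rho_B = \frac{4d\kappa}{B}$ when $1 < B < 4d\kappa$, and $\rho_B = 1$ once $B \ge 4d\kappa$. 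Each of the four constraints then turns into an explicit requirement on exactly one of the free parameters $N$, $h$, $\Delta$.

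First I would extract $N$ from term $\circledOne$. Since $\sqrt{\mu/(\rho_B^2 L)} = \rho_B^{-1}\sqrt{\mu/L}$, the geometric factor $(1 - \rho_B^{-1}\sqrt{\mu/L})^N$ forces $N \gtrsim \rho_B \sqrt{L/\mu}\,\log\frac{1}{\varepsilon}$, with the bracketed initial-condition constant contributing only additively inside the logarithm. Substituting the regime-dependent $\rho_B$ gives $N = \Obound{\sqrt{d^2 L/\mu}\,\log\frac1\varepsilon}$ for $B = 1$, a version smaller by the factor $B$ in the middle regime, and $N = \Obound{\sqrt{L/\mu}\,\log\frac1\varepsilon}$ once $\rho_B = 1$; multiplying by $B$ then reproduces the stated oracle complexity $T = NB$ in each line.

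Next I would fix $h$ from the two bias-type terms $\circledTwo \propto h^2$ and $\circledFour \propto h^{2(\beta-1)}$, and $\Delta$ from $\circledThree \propto \Delta^2/h^2$. Substituting $\rho_B$ collapses the common factor $\sqrt{\rho_B^2 \mu L B^2}$ to $4d\kappa\sqrt{\mu L}$ when $B \le 4d\kappa$ and to $B\sqrt{\mu L}$ when $B \ge 4d\kappa$. In the former case $\circledTwo \le \varepsilon$ is binding and yields $h \lesssim \varepsilon^{1/2}\mu^{1/4}$; one checks that $\circledFour$ is then automatically satisfied because $h^{2(\beta-1)} \le h^2$ for $\beta \ge 2$ and $h < 1$ (indeed $\circledFour \lesssim \varepsilon^{\beta-1} \le \varepsilon$). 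In the latter case the $1/B$ suppression of $\circledTwo$ makes the $B$-independent Kernel-bias term $\circledFour \le \varepsilon$ binding instead, giving $h \lesssim (\varepsilon\sqrt\mu)^{1/(2(\beta-1))}$. In both cases, inserting the chosen $h$ into $\circledThree \le \varepsilon$ and solving for $\Delta$ produces the advertised noise levels $\Delta \lesssim \varepsilon\mu^{1/2}/\sqrt d$ and $\Delta \lesssim (\varepsilon\sqrt\mu)^{\beta/(2(\beta-1))} B^{1/2}/d$ respectively.

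The hard part will be the crossover at $B = 4d\kappa$: I must verify that the binding bias term genuinely switches from the smoothing-variance term $\circledTwo$ to the Kernel-approximation term $\circledFour$ exactly when $\rho_B$ saturates at $1$, and that with the new, larger $h$ the suppressed term $\circledTwo$ still remains below $\varepsilon$ for every admissible $B$ — it is this requirement that forces the lower envelope $B \gtrsim d^2 \Delta^2/(\varepsilon\mu)^{\beta/(\beta-1)}$ and hence the $\max\{\cdot,\cdot\}$ form of $T$ in the last case. Tracking which power of $\varepsilon\sqrt\mu$ survives after the substitutions, and confirming $\varepsilon^{\beta-1} \le \varepsilon$ so that $\circledFour$ never dominates in the small-batch regimes, is the only delicate bookkeeping; the remainder is routine substitution into the master inequality.
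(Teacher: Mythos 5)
Your proposal follows essentially the same route as the paper's own proof: the same four-term master inequality, the same case split through $\rho_B = \max\{1, \tfrac{4d\kappa}{B}\}$, extraction of $N$ from $\circledOne$, of $h$ from $\circledTwo$/$\circledFour$ (with $\circledTwo$ binding for $B \le 4d\kappa$ and $\circledFour$ binding under overbatching), of $\Delta$ from $\circledThree$, and $T = N\cdot B$, including inverting the $\Delta$--$B$ relation to get the $\max$ form of $T$. One small correction to your last paragraph: the lower envelope $B \gtrsim d^2\Delta^2/(\varepsilon\sqrt{\mu})^{\beta/(\beta-1)}$ is forced by term $\circledThree$ (it is the only term containing $\Delta^2$), exactly as you yourself derive it when solving for the noise level, not by keeping $\circledTwo$ below $\varepsilon$; that latter check --- which the paper silently skips --- gives a separate, $\Delta$-independent requirement (roughly $B \gtrsim d\,(\varepsilon\sqrt{\mu})^{-(\beta-2)/(\beta-1)}$) that is nontrivial only when $\beta > 2$, so carrying out your plan would in fact be slightly more careful than the published argument.
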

\begin{proof}
Let us consider case $B = 1$, then we have the following convergence rate:
\begin{align*}
    \expect{f(x_N)} - f^*
    & \leq \underbrace{\left( 1 - \sqrt{\frac{\mu}{(4 d \kappa)^2 L}} \right)^N \left[ f(x_0) - f^* + \frac{\mu}{2} \norms{x_0 - x^*}^2 \right]}_{\circledOne} + \underbrace{\frac{4 d \kappa L^2 h^2}{\sqrt{(4 d \kappa)^2 \mu L}}}_{\circledTwo}\\
    & \quad + \underbrace{\frac{\kappa d^2 \Delta^2}{h^2 \sqrt{(4 d \kappa)^2 \mu L}}}_{\circledThree} + \underbrace{\frac{\kappa_\beta^2 L^2 h^{2(\beta - 1)}}{\sqrt{4 \mu L}}}_{\circledFour}.
\end{align*}

\textbf{From term $\circledOne$}, we find iteration number $N$ required to achieve $\varepsilon$-accuracy:
\begin{align*}
    \left( 1 - \sqrt{\frac{\mu}{(4 d \kappa)^2 L}} \right)^N \left[ f(x_0) - f^* + \frac{\mu}{2} \norms{x_0 - x^*}^2 \right] \leq \varepsilon \quad \Rightarrow  \quad \boxed{N = \OboundTilde{\sqrt{\frac{d^2L}{\mu}}}.}
\end{align*}

\textbf{From terms $\circledTwo$, $\circledFour$} we find the smoothing parameter $h$:
\begin{align*}
    & \circledTwo: \quad \frac{4 d \kappa L^2 h^2}{\sqrt{(4 d \kappa)^2 \mu L}} \leq \varepsilon \quad \Rightarrow \quad h^2 \lesssim \varepsilon \sqrt{\mu} \quad \Rightarrow \quad \boxed{h \lesssim (\varepsilon \sqrt{\mu})^{1/2};} \\
    & \circledFour: \quad \frac{\kappa_\beta^2 L^2 h^{2(\beta - 1)}}{\sqrt{4 \mu L}} \leq \varepsilon \quad \Rightarrow \quad h^{2(\beta - 1)} \lesssim \varepsilon \sqrt{\mu} \quad \Rightarrow \quad h \lesssim (\varepsilon \sqrt{\mu})^{\frac{1}{2(\beta - 1)}}.
\end{align*}

\textbf{From term $\circledThree$}, we find the maximum noise level $\Delta$ at which Algorithm \ref{algorithm} can still achieve the desired accuracy:
\begin{align*}
    \frac{\kappa d^2 \Delta^2}{h^2 \sqrt{(4 d \kappa)^2 \mu L}} \leq \varepsilon \quad \Rightarrow \quad \Delta^2 \lesssim \frac{\varepsilon \sqrt{\mu} h^2}{d} \quad \Rightarrow \quad \boxed{\Delta \lesssim\frac{\varepsilon\sqrt{\mu}}{\sqrt{d}}.}
\end{align*}
The oracle complexity in this case is obtained as follows:
\begin{align*}
    \boxed{T = N \cdot B = \OboundTilde{\sqrt{\frac{d^2L}{\mu}}}.}
\end{align*}

Consider now the case $1 < B < 4d\kappa$, then we have the convergence rate:
\begin{align*}
    \expect{f(x_N)} - f^*
    & \leq \underbrace{\left( 1 - \sqrt{\frac{\mu B^2}{(4 d \kappa)^2 L}} \right)^N \left[ f(x_0) - f^* + \frac{\mu}{2} \norms{x_0 - x^*}^2 \right]}_{\circledOne} + \underbrace{\frac{4 d \kappa L^2 h^2}{\sqrt{(4 d \kappa)^2 \mu L }}}_{\circledTwo}\\
    & \quad + \underbrace{\frac{\kappa d^2 \Delta^2}{h^2 \sqrt{(4 d \kappa)^2 \mu L }}}_{\circledThree} + \underbrace{\frac{\kappa_\beta^2 L^2 h^{2(\beta - 1)}}{\sqrt{4 \mu L}}}_{\circledFour}.
\end{align*}

\textbf{From term $\circledOne$}, we find iteration number $N$ required for Algorithm \ref{algorithm} to achieve $\varepsilon$-accuracy:
\begin{align*}
    \left( 1 - \sqrt{\frac{B^2 \mu}{(4 d \kappa)^2 L}} \right)^N \left[ f(x_0) - f^* + \frac{\mu}{2} \norms{x_0 - x^*}^2 \right] \leq \varepsilon \quad \Rightarrow  \quad \boxed{N = \OboundTilde{\sqrt{\frac{d^2L}{B^2 \mu}}}.}
\end{align*}

\textbf{From terms $\circledTwo$, $\circledFour$} we find the smoothing parameter $h$:
\begin{align*}
    & \circledTwo: \quad \frac{4 d \kappa L^2 h^2}{\sqrt{(4 d \kappa)^2 \mu L}} \leq \varepsilon \quad \Rightarrow \quad h^2 \lesssim \varepsilon \sqrt{\mu} \quad \Rightarrow \quad \boxed{h \lesssim (\varepsilon \sqrt{\mu})^{1/2};} \\
    & \circledFour: \quad \frac{\kappa_\beta^2 L^2 h^{2(\beta - 1)}}{\sqrt{4 \mu L}} \leq \varepsilon \quad \Rightarrow \quad h^{2(\beta - 1)} \lesssim \varepsilon \sqrt{\mu} \quad \Rightarrow \quad h \lesssim (\varepsilon \sqrt{\mu})^{\frac{1}{2(\beta - 1)}}.
\end{align*}

\newpage

\textbf{From term $\circledThree$}, we find the maximum noise level $\Delta$ at which Algorithm \ref{algorithm} can still achieve the desired accuracy:
\begin{align*}
    \frac{\kappa d^2 \Delta^2}{h^2 \sqrt{(4 d \kappa)^2 \mu L}} \leq \varepsilon \quad \Rightarrow \quad \Delta^2 \lesssim \frac{\varepsilon \sqrt{\mu} h^2}{d} \quad \Rightarrow \quad \boxed{\Delta \lesssim\frac{\varepsilon\sqrt{\mu}}{\sqrt{d}}.}
\end{align*}
The oracle complexity in this case is obtained as follows:
\begin{align*}
    \boxed{T = N \cdot B = \OboundTilde{\sqrt{\frac{d^2L}{\mu}}}.}
\end{align*}

Now let us move to the case where $B = 4d \kappa$, then we have convergence rate:
\begin{align*}
    \expect{f(x_N)} - f^*
    & \leq \underbrace{\left( 1 - \sqrt{\frac{\mu}{L}} \right)^N \left[ f(x_0) - f^* + \frac{\mu}{2} \norms{x_0 - x^*}^2 \right]}_{\circledOne} + \underbrace{\frac{L^2 h^2}{\sqrt{ \mu L}}}_{\circledTwo}\\
    & \quad + \underbrace{\frac{d \Delta^2}{h^2 \sqrt{ \mu L}}}_{\circledThree} + \underbrace{\frac{\kappa_\beta^2 L^2 h^{2(\beta - 1)}}{\sqrt{4 \mu L}}}_{\circledFour}.
\end{align*}

\textbf{From term $\circledOne$}, we find iteration number $N$ required for Algorithm \ref{algorithm} to achieve $\varepsilon$-accuracy:
\begin{align*}
    \left( 1 - \sqrt{\frac{ \mu}{L}} \right)^N \left[ f(x_0) - f^* + \frac{\mu}{2} \norms{x_0 - x^*}^2 \right] \leq \varepsilon \quad \Rightarrow  \quad \boxed{N = \OboundTilde{\sqrt{\frac{L}{\mu}}}.}
\end{align*}

\textbf{From terms $\circledTwo$, $\circledFour$} we find the smoothing parameter $h$:
\begin{align*}
    & \circledTwo: \quad \frac{L^2 h^2}{\sqrt{\mu L}} \leq \varepsilon \quad \Rightarrow \quad h^2 \lesssim \varepsilon \sqrt{\mu} \quad \Rightarrow \quad \boxed{h \lesssim (\varepsilon \sqrt{\mu})^{1/2};} \\
    & \circledFour: \quad \frac{\kappa_\beta^2 L^2 h^{2(\beta - 1)}}{\sqrt{4 \mu L}} \leq \varepsilon \quad \Rightarrow \quad h^{2(\beta - 1)} \lesssim \varepsilon \sqrt{\mu} \quad \Rightarrow \quad h \lesssim (\varepsilon \sqrt{\mu})^{\frac{1}{2(\beta - 1)}}.
\end{align*}

\textbf{From term $\circledThree$}, we find the maximum noise level $\Delta$ at which Algorithm \ref{algorithm} can still achieve the desired accuracy:
\begin{align*}
    \frac{ d \Delta^2}{h^2 \sqrt{\mu L}} \leq \varepsilon \quad \Rightarrow \quad \Delta^2 \lesssim \frac{\varepsilon \sqrt{\mu} h^2}{d} \quad \Rightarrow \quad \boxed{\Delta \lesssim\frac{\varepsilon\sqrt{\mu}}{\sqrt{d}}.}
\end{align*}
The oracle complexity in this case is obtained as follows:
\begin{align*}
    \boxed{T = N \cdot B = \OboundTilde{\sqrt{\frac{d^2L}{\mu}}}.}
\end{align*}

Finally, consider the case when $B > 4d\kappa$, then we have convergence rate:
\vspace{-1em}
\begin{align*}
    \expect{f(x_N)} - f^*
    & \leq \underbrace{\left( 1 - \sqrt{\frac{\mu}{L}} \right)^N \left[ f(x_0) - f^* + \frac{\mu}{2} \norms{x_0 - x^*}^2 \right]}_{\circledOne} + \underbrace{\frac{4 d \kappa L^2 h^2}{\sqrt{\mu L B^2}}}_{\circledTwo}\\
    & \quad + \underbrace{\frac{\kappa d^2 \Delta^2}{h^2 \sqrt{ \mu L B^2}}}_{\circledThree} + \underbrace{\frac{\kappa_\beta^2 L^2 h^{2(\beta - 1)}}{\sqrt{4 \mu L}}}_{\circledFour}.
\end{align*}

\textbf{From term $\circledOne$}, we find iteration number $N$ required for Algorithm \ref{algorithm} to achieve $\varepsilon$-accuracy:
\begin{align*}
    \left( 1 - \sqrt{\frac{ \mu}{L}} \right)^N \left[ f(x_0) - f^* + \frac{\mu}{2} \norms{x_0 - x^*}^2 \right] \leq \varepsilon \quad \Rightarrow  \quad \boxed{N = \OboundTilde{\sqrt{\frac{L}{\mu}}}.}
\end{align*}

\textbf{From terms $\circledTwo$, $\circledFour$} we find the smoothing parameter $h$:
\begin{align*}
    & \circledTwo: \quad \frac{4 d \kappa L^2 h^2}{\sqrt{\mu LB^2}} \leq \varepsilon \quad \Rightarrow \quad h^2 \lesssim \frac{\varepsilon \sqrt{\mu}}{d} B \quad \Rightarrow \quad h \lesssim \sqrt{\frac{\varepsilon \sqrt{\mu} B}{d}}; \\
    & \circledFour: \quad \frac{\kappa_\beta^2 L^2 h^{2(\beta - 1)}}{\sqrt{4 \mu L}} \leq \varepsilon \quad \Rightarrow \quad h^{2(\beta - 1)} \lesssim \varepsilon \sqrt{\mu} \quad \Rightarrow \quad \boxed{h \lesssim (\varepsilon \sqrt{\mu})^{\frac{1}{2(\beta - 1)}}.}
\end{align*}

\textbf{From term $\circledThree$}, we find the maximum noise level $\Delta$ (via batch size $B$) at which Algorithm~\ref{algorithm} can still achieve $\varepsilon$~accuracy:
\begin{align*}
    \frac{\kappa d^2 \Delta^2}{h^2 \sqrt{ \mu L B^2}} \leq \varepsilon \quad \Rightarrow \quad \Delta^2 \lesssim \frac{(\varepsilon \sqrt{\mu})^{1 + \frac{1}{\beta - 1} B}}{d^2} \quad \Rightarrow \quad \boxed{\Delta \lesssim \frac{(\varepsilon \sqrt{\mu})^{\frac{\beta}{2(\beta - 1)}} B^{1/2}}{d}.}
\end{align*}
or let's represent the batch size $B$ via the noise level $\Delta$:
\begin{align*}
    \frac{\kappa d^2 \Delta^2}{h^2 \sqrt{ \mu L B^2}} \leq \varepsilon \quad \Rightarrow \quad B \gtrsim \frac{\kappa d^2 \Delta^2}{ (\varepsilon \sqrt{\mu})^{1 + \frac{1}{\beta - 1}}} \quad \Rightarrow \quad B = \Obound{\frac{d^2 \Delta^2}{(\varepsilon \sqrt{\mu})^{\frac{\beta}{\beta - 1}}}}. 
\end{align*}
Then the oracle complexity $T = N \cdot B$ in this case has the following form: 
\begin{align*}
    \boxed{T = \max\left\{ \OboundTilde{\sqrt{\frac{d^2L}{\mu}}}, \OboundTilde{\frac{d^2 \Delta^2}{(\varepsilon \mu)^{\frac{\beta}{\beta - 1}}}}  \right\}.}
\end{align*}
\end{proof}
\qed
As can be seen from Theorem~\ref{th:main_result}, Algorithm~\ref{algorithm} indeed improves the iteration complexity compared to previous works (see Table~\ref{tab:table_compare}), reaching the optimal estimate (at least for $L_2$ smooth functions) at batch size $B= 4d\kappa$. However, if we consider the case $B \in [1, 4d\kappa]$, then when the batch size increases from $1$, the algorithm improves the convergence rate (without changing the oracle complexity), but achieves the same error floor. This is not very good, because the asymptote does not depend on either the batch size or the smoothness order of the function. However, if we take the batch size larger than $B>4d\kappa$, we will significantly improve the maximal noise level by worsening the oracle complexity. That is, in the overbatching condition, the error floor depends on both the batch size and the smoothness order, which can play a critical role in real life.
\vspace{-1em}
\section{Conclusion}\label{sec:Conclusion}
In this paper, we proposed a novel accelerated gradient-free algorithm to solve the black-box optimization problem under the assumption of increased smoothness and strong convexity of the objective function. By choosing a first-order accelerated algorithm and generalizing it to the Batched algorithm with a biased gradient oracle, we were able to improve the iteration complexity, reaching optimal estimates. Moreover, unlike previous related work, we focused on all three optimality parameters of the gradient-free algorithm, including the maximal noise level, for which we performed a fine-grained analysis. Finally, we were able to confirm overbatching effect, which has a positive impact~on~error~floor.

\vspace{-1em}
%
%
%
%
\bibliographystyle{splncs04}
{
\normalsize
\bibliography{3_references}

\begin{thebibliography}{10}
\providecommand{\url}[1]{\texttt{#1}}
\providecommand{\urlprefix}{URL }
\providecommand{\doi}[1]{https://doi.org/#1}

\bibitem{Akhavan_2022}
Akhavan, A., Chzhen, E., Pontil, M., Tsybakov, A.: A gradient estimator via l1-randomization for online zero-order optimization with two point feedback. Advances in Neural Information Processing Systems  \textbf{35},  7685--7696 (2022)

\bibitem{Akhavan_2023}
Akhavan, A., Chzhen, E., Pontil, M., Tsybakov, A.B.: Gradient-free optimization of highly smooth functions: improved analysis and a new algorithm. arXiv preprint arXiv:2306.02159  (2023)

\bibitem{Akhavan_2020}
Akhavan, A., Pontil, M., Tsybakov, A.: Exploiting higher order smoothness in derivative-free optimization and continuous bandits. Advances in Neural Information Processing Systems  \textbf{33},  9017--9027 (2020)

\bibitem{Alashqar_2023}
Alashqar, B., Gasnikov, A., Dvinskikh, D., Lobanov, A.: Gradient-free federated learning methods with l 1 and l 2-randomization for non-smooth convex stochastic optimization problems. Computational Mathematics and Mathematical Physics  \textbf{63}(9),  1600--1653 (2023)

\bibitem{Bach_2016}
Bach, F., Perchet, V.: Highly-smooth zero-th order online optimization. In: Conference on Learning Theory. pp. 257--283. PMLR (2016)

\bibitem{Bogolubsky_2016}
Bogolubsky, L., Dvurechenskii, P., Gasnikov, A., Gusev, G., Nesterov, Y., Raigorodskii, A.M., Tikhonov, A., Zhukovskii, M.: Learning supervised pagerank with gradient-based and gradient-free optimization methods. Advances in neural information processing systems  \textbf{29} (2016)

\bibitem{Conn_2009}
Conn, A.R., Scheinberg, K., Vicente, L.N.: Introduction to derivative-free optimization. SIAM (2009)

\bibitem{Dvinskikh_2022}
Dvinskikh, D., Tominin, V., Tominin, I., Gasnikov, A.: Noisy zeroth-order optimization for non-smooth saddle point problems. In: International Conference on Mathematical Optimization Theory and Operations Research. pp. 18--33. Springer (2022)

\bibitem{Gasnikov_Encyclopedia}
Gasnikov, A., Dvinskikh, D., Dvurechensky, P., Gorbunov, E., Beznosikov, A., Lobanov, A.: Randomized gradient-free methods in convex optimization. In: Encyclopedia of Optimization, pp. 1--15. Springer (2023)

\bibitem{Gasnikov_2022_ICML}
Gasnikov, A., Novitskii, A., Novitskii, V., Abdukhakimov, F., Kamzolov, D., Beznosikov, A., Takac, M., Dvurechensky, P., Gu, B.: The power of first-order smooth optimization for black-box non-smooth problems. In: International Conference on Machine Learning. pp. 7241--7265. PMLR (2022)

\bibitem{Gasnikov_Highly}
Gasnikov, A., Lobanov, A., Stonyakin, F.: Highly smooth zeroth-order methods for solving optimization problems under the pl condition. Computational Mathematics and Mathematical Physics  \textbf{64}(4),  739--770 (2024)

\bibitem{Gorbunov_2018}
Gorbunov, E., Dvurechensky, P., Gasnikov, A.: An accelerated method for derivative-free smooth stochastic convex optimization. arXiv preprint arXiv:1802.09022  (2018)

\bibitem{Kimiaei_2022}
Kimiaei, M., Neumaier, A.: Efficient unconstrained black box optimization. Mathematical Programming Computation  \textbf{14}(2),  365--414 (2022)

\bibitem{Kornilov_NIPS}
Kornilov, N., Shamir, O., Lobanov, A., Dvinskikh, D., Gasnikov, A., Shibaev, I., Gorbunov, E., Horv{\'a}th, S.: Accelerated zeroth-order method for non-smooth stochastic convex optimization problem with infinite variance. Advances in Neural Information Processing Systems  \textbf{36} (2024)

\bibitem{Lobanov_Noise}
Lobanov, A.: Stochastic adversarial noise in the “black box” optimization problem. In: International Conference on Optimization and Applications. pp. 60--71. Springer (2023)

\bibitem{Lobanov_MOTOR}
Lobanov, A., Anikin, A., Gasnikov, A., Gornov, A., Chukanov, S.: Zero-order stochastic conditional gradient sliding method for non-smooth convex optimization. In: International Conference on Mathematical Optimization Theory and Operations Research. pp. 92--106. Springer (2023)

\bibitem{Lobanov_Overbatching}
Lobanov, A., Bashirov, N., Gasnikov, A.: The "overbatching" effect? yes, or how to improve error floor in black-box optimization problems. arXiv preprint arXiv  (2024)

\bibitem{Lobanov_OPTIMA}
Lobanov, A., Gasnikov, A.: Accelerated zero-order sgd method for solving the black box optimization problem under “overparametrization” condition. In: International Conference on Optimization and Applications. pp. 72--83. Springer (2023)

\bibitem{Lobanov_2024}
Lobanov, A., Gasnikov, A., Krasnov, A.: Acceleration exists! optimization problems when oracle can only compare objective function values (2024)

\bibitem{Lobanov_decentralized}
Lobanov, A., Veprikov, A., Konin, G., Beznosikov, A., Gasnikov, A., Kovalev, D.: Non-smooth setting of stochastic decentralized convex optimization problem over time-varying graphs. Computational Management Science  \textbf{20}(1), ~48 (2023)

\bibitem{Nesterov_2003}
Nesterov, Y.: Introductory lectures on convex optimization: A basic course, vol.~87. Springer Science \& Business Media (2003)

\bibitem{Novitskii_2021}
Novitskii, V., Gasnikov, A.: Improved exploiting higher order smoothness in derivative-free optimization and continuous bandit. arXiv preprint arXiv:2101.03821  (2021)

\bibitem{Polyak_1990}
Polyak, B.T., Tsybakov, A.B.: Optimal order of accuracy of search algorithms in stochastic optimization. Problemy Peredachi Informatsii  \textbf{26}(2),  45--53 (1990)

\bibitem{Rios_2013}
Rios, L.M., Sahinidis, N.V.: Derivative-free optimization: a review of algorithms and comparison of software implementations. Journal of Global Optimization  \textbf{56}(3),  1247--1293 (2013)

\bibitem{Stich_2019}
Stich, S.U.: Unified optimal analysis of the (stochastic) gradient method. arXiv preprint arXiv:1907.04232  (2019)

\bibitem{Vaswani_2019}
Vaswani, S., Bach, F., Schmidt, M.: Fast and faster convergence of sgd for over-parameterized models and an accelerated perceptron. In: The 22nd international conference on artificial intelligence and statistics. pp. 1195--1204. PMLR (2019)

\bibitem{Zorich_2016}
Zorich, V.A., Paniagua, O.: Mathematical analysis II, vol.~220. Springer (2016)

\end{thebibliography}
}
\newpage
\appendix
\vbox{%
    \hsize\textwidth
    \linewidth\hsize
    \vskip 0.1in
    \vskip 0.29in
    \vskip -\parskip
    \hrule height 2pt
    \vskip 0.19in
    \vskip 0.09in
    \begin{center}
        \LARGE \bf APPENDIX 
    \end{center}
    \vskip 0.29in
    \vskip -\parskip
    \hrule height 2pt
    \vskip 0.09in}

\section{Auxiliary Facts and Results}

    In this section we list auxiliary facts and results that we use several times in our~proofs.
    
    \subsection{Squared norm of the sum} For all $a_1,...,a_n \in \mathbb{R}^d$, where $n=\{2,3\}$
    \begin{equation}
        \label{eq:squared_norm_sum}
        \norms{a_1 + ... + a_n }^2 \leq n \norms{ a_1 }^2 + ... + n \norms{a_n}^2.
    \end{equation}

    \subsection{Fenchel-Young inequality} For all $a,b\in\mathbb{R}^d$ and $\lambda > 0$
    \begin{equation}
        \dotprod{a}{b} \leq \frac{\|a\|^2}{2\lambda} + \frac{\lambda\|b\|^2}{2}.\label{eq:fenchel_young_inequality}
    \end{equation}

    \subsection{$L$ smoothness function}
        Function $f$ is called $L$-smooth on $\mathbb{R}^d$ with $L~>~0$ when it is differentiable and its gradient is $L$-Lipschitz continuous on $\mathbb{R}^d$, i.e.\ 
        \begin{equation}
            \norms{\nabla f(x) - \nabla f(y)} \leq L \norms{x - y},\quad \forall x,y\in \mathbb{R}^d. \label{eq:L_smoothness}
        \end{equation}
         It is well-known that $L$-smoothness implies (see e.g., Assumption \ref{ass:L_smooth})
        \begin{eqnarray*}
            f(y) \leq f(x) + \dotprod{\nabla f(x)}{y-x} + \frac{L}{2}\norms{y-x}^2\quad \forall x,y\in \mathbb{R}^d,
        \end{eqnarray*} 
        and if $f$ is additionally convex, then
        \begin{eqnarray*}
            \norms{ \nabla f(x) - \nabla f(y) }^2 \leq 2L \left( f(x) - f(y) - \dotprod{ \nabla f(y)}{x-y} \right) \quad \forall x,y \in \mathbb{R}^d. 
        \end{eqnarray*}

    \subsection{Wirtinger-Poincare inequality}
        Let $f$ is differentiable, then for all $x \in \mathbb{R}^d$, $h \ee \in S^d_2(h)$:
        \begin{equation}\label{eq:Wirtinger_Poincare}
            \expect{f(x+ h \ee)^2} \leq \frac{h^2}{d} \expect{\norms{\nabla f(x + h \ee)}^2}.
        \end{equation}

    \subsection{Taylor expansion} Using the Taylor expansion we have
    \begin{equation}
        \label{Taylor_expansion_1}
        \nabla_z f(x+ h r \uu) = \nabla_z f(x) + \sum_{1 \leq |n| \leq l-1} \frac{(r h)^{|n|}}{n!} D^{(n)} \nabla_z f(x) \uu^n + R(h r \uu),
    \end{equation}
    where by assumption 
    \begin{equation}\label{Taylor_expansion_2}
        |R(h r \uu)| \leq \frac{L}{(l-1)!}  \norms{h r \uu}^{\beta - 1} = \frac{L}{(l-1)!}|r|^{\beta - 1} h^{\beta - 1} \norms{\uu}^{\beta - 1}.
    \end{equation}
    
    \subsection{Kernel property} If $\ee$ is uniformly distributed on $S_2^d(1)$ we have $\mathbb{E}[\ee \ee^{\text{T}}] = (1/d)I_{d \times d}$, where $I_{d \times d}$ is the identity matrix. Therefore, using the facts $\mathbb{E}[r K(r)] = 1$ and $\mathbb{E}[r^{|n|} K(r)] = 0$ for $2 \leq |n| \leq l$ we have
        \begin{equation}\label{Kernel_property}
            \mathbb{E}\left[ \frac{d}{h} \left( \dotprod{\nabla f(x)}{h r \ee} + \sum_{2 \leq |n| \leq l} \frac{(r h)^{|n|}}{n!} D^{(n)} f(x) \ee^n \right) K(r) \ee  \right] = \nabla f(x).
        \end{equation}

    \subsection{Bounds of the Weighted Sum of Legendre Polynomials}
    Let $\kappa_\beta = \int |u|^\beta |K(u)| du$ and set $\kappa = \int K^2(u) du$. Then if $K$ be a weighted sum of Legendre polynomials, then it is proved in (see Appendix A.3, \cite{Bach_2016}) that $\kappa_\beta$ and $\kappa$ do not depend on $d$, they~depend~only~on~$\beta$, such that for $\beta \geq 1$: 
        \begin{equation}\label{eq_remark_1}
            \kappa_\beta \leq 2 \sqrt{2} (\beta-1),
        \end{equation}
        \begin{equation}\label{eq_remark_2}
            \kappa \leq 3 \beta^{3}.
        \end{equation}

\section{Missing proof of Theorem \ref{th:First_Order}}\label{app:Proof}
In this Section we demonstrate a missing proof of Theorem~\ref{th:First_Order}, namely a generalization of Lemma~\ref{lem:background} to the case with a biased gradient oracle (see Definition~\ref{def:gradient_oracle}). Therefore, our reasoning is based on the proof of Lemma~\ref{lem:background}~\cite{Vaswani_2019}.   

Before proceeding directly to the proof, we recall the update rules of First-order Accelerated SGD from \cite{Vaswani_2019}:
\begin{align}
    y_k &= \alpha_k z_k + (1-\alpha_k) x_k; \label{eq:mix-step} \\
    x_{k+1} &=y_k -\eta \gg_k; \label{eq:grad-step} \\
    z_{k+1} &= \beta_k z_k + (1-\beta_k)y_k - \gamma_k \eta \gg_k, \label{eq:egrad-step} 
\end{align}
where we choose the parameters $\gk$, $\agk$, $\bgk$, $\ak$, $\bk$ such that the following equations are satisfied:

\begin{align}
    \gk  &= \frac{1}{2 \rho} \cdot \left[1 + \frac{\bgk (1 - \agk)}{\agk} \right]; \label{eq:gamma-update} \\
    \agk &= \frac{\gk \bgk \bkk^2 \eta}{\gk \bgk \bkk^2 \eta + 2 \ak^2}; \label{eq:alpha-update} \\	
    \bgk & \geq  1 - \gk \mu \eta; \label{eq:beta-update} \\
    \akk &= \gk \sqrt{\eta \rho} \bkk; \label{eq:a-update} \\
    \bkk & \leq \frac{\bk}{\sqrt{\bgk}}. \label{eq:b-update} 
\end{align}

Now, we're ready to move on to the proof itself. Let $\rkk = \norms{\vkk - \xopt}$ and $\gg_k = \gg(y_k, \xi_k)$ from Definition~\ref{def:gradient_oracle}, then using equation~\eqref{eq:egrad-step}:
\begin{align*}
     \rkk^2 &= \normsq{\bgk \vk + (1 - \bgk) \yk - \xopt - \gk \eta \gg_k} \\
    \rkk^2 & = \normsq{\bgk \vk + (1 - \bgk) \yk - \xopt} + \gk^2 \eta^2 \normsq{\gg_k} + 2 \gk \eta \left\langle \xopt - \bgk \vk - (1 - \bgk) \yk, \gg_k \right\rangle. 
\end{align*}

Taking expecation wrt to $\xi_k$,
\begin{align*}
     \E[\rkk^2] &= \E[\normsq{\bgk \vk + (1 - \bgk) \yk - \xopt}] + \gk^2 \eta^2 \E \normsq{\gg_k} \nonumber\\
    & \quad \quad + 2 \gk \eta \E \left[ \left\langle \xopt - \bgk \vk - (1 - \bgk) \yk, \gg_k \right\rangle \right] \\
    & \overset{\eqref{Ass:Gradient_noise}}{\leq} \normsq{\bgk \vk + (1 - \bgk) \yk - \xopt} + \gk^2  \eta^2 \rho \normsq{\grad{\yk}} \\
    & \quad \quad + 2 \gk \eta \langle \xopt - \bgk \vk - (1 - \bgk) \yk, \expect{\gg_k} \rangle  + \gk^2 \eta^2 \sigma^2 \\
    & = \normsq{\bgk (\vk - \xopt) + (1 - \bgk) (\yk - \xopt)} + \gk^2 \eta^2 \rho \normsq{\grad{\yk}} \\
    &\quad \quad + 2 \gk \eta \langle \xopt - \bgk \vk - (1 - \bgk) \yk, \expect{\gg_k} \rangle + \gk^2 \eta^2 \sigma^2  \\
    & \leq \bgk \normsq{\vk - \xopt} + (1 - \bgk) \normsq{\yk - \xopt} + \gk^2 \eta^2 \rho \normsq{\grad{\yk}} \\
    & \quad \quad + 2 \gk \eta \langle \xopt - \bgk \vk - (1 - \bgk) \yk, \expect{\gg_k} \rangle + \gk^2 \eta^2 \sigma^2   \tag{By convexity of $\normsq{\cdot}$}\\
    & = \bgk \rk^2 + (1 - \bgk) \normsq{\yk - \xopt} + \gk^2 \eta^2 \rho \normsq{\grad{\yk}} \\
    & \quad \quad + 2 \gk \eta \langle \xopt - \bgk \vk - (1 - \bgk) \yk, \expect{\gg_k} \rangle + \gk^2 \eta^2 \sigma^2  \\
    & = \bgk \rk^2 + (1 - \bgk) \normsq{\yk - \xopt} + \gk^2  \eta^2 \rho \normsq{\grad{\yk}} \\
    & \quad \quad+ 2 \gk  \eta \left\langle \bgk (\yk - \vk) + \xopt - \yk, \expect{\gg_k} \right\rangle  + \gk^2 \eta^2 \sigma^2  \\
    & \overset{\eqref{eq:mix-step}}{=} \bgk \rk^2 + (1 - \bgk) \normsq{\yk - \xopt} + \gk^2  \eta^2 \rho \normsq{\grad{\yk}} \\
    & \quad \quad+ 2 \gk  \eta \left\langle \frac{\bgk (1 - \agk)}{\agk} \left( \xk - \yk \right) + \xopt - \yk, \expect{\gg_k} \right\rangle  + \gk^2 \eta^2 \sigma^2 \\
    & = \bgk \rk^2 + (1 - \bgk) \normsq{\yk - \xopt} + \gk^2 \eta^2 \rho \normsq{\grad{\yk}} \\
    & \quad \quad+ 2 \gk \eta \left[ \frac{\bgk (1 - \agk)}{\agk} \langle \expect{\gg_k}, \left( \xk - \yk \right) \rangle + \langle \expect{\gg_k}, \xopt - \yk \rangle \right] \\
    & \quad \quad + \gk^2 \eta^2 \sigma^2 \\
    & \leq \bgk \rk^2 + (1 - \bgk) \normsq{\yk - \xopt} + \gk^2 \eta^2 \rho \normsq{\grad{\yk}} \\
    & \quad \quad + 2 \gk \eta \left[ \frac{\bgk (1 - \agk)}{\agk} \left( f(\xk) - f(\yk) \right) + \langle \expect{\gg_k}, \xopt - \yk \rangle\right] + \gk^2 \eta^2 \sigma^2\\
    & \quad \quad + 2 \gk \eta \left[ \frac{\bgk (1 - \agk)}{\agk} \dotprod{\expect{\gg_k} - \nabla f(y_k)}{x_k - y_k}\right].
    \tag{By convexity} 
\end{align*}

By strong convexity,
\begin{align}
    &\E[\rkk^2] \leq \bgk \rk^2 + (1 - \bgk) \normsq{\yk - \xopt} + \gk^2 \eta^2 \rho \normsq{\grad{\yk}} \nonumber \\ 
    & + 2 \gk \eta \left[ \frac{\bgk (1 - \agk)}{\agk} \left( f(\xk) - f(\yk) \right) + f^* - f(\yk) - \frac{\mu}{2} \normsq{\yk - \xopt} \right]  \nonumber \\
    &  + 2 \gk \eta \left[ \frac{\bgk (1 - \agk)}{\agk} \dotprod{\expect{\gg_k} - \nabla f(y_k)}{x_k - y_k} + \dotprod{\expect{\gg_k} - \nabla f(y_k)}{x^* - y_k}\right] \nonumber \\
    &  + \gk^2 \eta^2 \sigma^2.  \label{eq:inter}
\end{align}

By Lipschitz continuity of the gradient,
\begin{align*}
    f(\xkk) - f(\yk)  &\leq \langle \grad{\yk}, \xkk - \yk \rangle + \frac{L}{2} \normsq{\xkk - \yk} \nonumber \\
    & \leq - \eta \langle \grad{\yk}, \gg_k \rangle + \frac{L \eta^2}{2} \normsq{\gg_k} \\
    & = - \eta \norms{\nabla f(y_k)}^2 + \frac{L \eta^2}{2} \normsq{\gg_k} - \eta \dotprod{\nabla f(y_k)}{\gg_k - \nabla f(y_k)} .
\end{align*}

Taking expectation wrt $\xi_k$, we obtain
\begin{align*}
    \E[f(\xkk) - f(\yk)] &\leq - \eta \normsq{\grad{\yk}} + \frac{L \rho \eta^2}{2} \normsq{\grad{\yk}} + \frac{L \eta^2 \sigma^2}{2} \\
    & \quad \quad - \eta \dotprod{\nabla f(y_k)}{\expect{\gg_k} - \nabla f(y_k)} \\
    \E[f(\xkk) - f(\yk)] & \overset{\eqref{eq:fenchel_young_inequality}}{\leq}  \left[ - \frac{\eta}{2} + \frac{L \rho \eta^2}{2} \right] \normsq{\grad{\yk}} + \frac{L \eta^2 \sigma^2}{2} \nonumber \\
    & \quad \quad + \frac{\eta}{2} \norms{\expect{\gg_k} - \nabla f(y_k)}^2.
\end{align*}

If $\eta \leq \frac{1}{ 2 \rho L}$,
\begin{align}
    &\E[f(\xkk) - f(\yk)] \leq  \left(\frac{-\eta}{4}\right) \normsq{\grad{\yk}} + \frac{L \eta^2 \sigma^2}{2} + \frac{\eta}{2} \norms{\expect{\gg_k} - \nabla f(y_k)}^2 \nonumber  \\
    &   \normsq{\grad{\yk}} \leq \left(\frac{4}{\eta}\right) \E[f(\yk) - f(\xkk)] + 2 L \eta \sigma^2 + 2 \norms{\expect{\gg_k} - \nabla f(y_k)}^2. \label{eq:descent-lemma}
\end{align}

From equations~\eqref{eq:inter} and~\eqref{eq:descent-lemma}, we get
\begin{align*}
    \E[\rkk^2] & \leq \bgk \rk^2 + (1 - \bgk) \normsq{\yk - \xopt} + 4 \gk^2 \rho \eta \E[f(\yk) - f(\xkk)] \\ 
    & \quad \quad + 2 \gk \eta \left[ \frac{\bgk (1 - \agk)}{\agk} \left( f(\xk) - f(\yk) \right) + f^* - f(\yk) - \frac{\mu}{2} \normsq{\yk - \xopt} \right] \\
    &  \quad \quad + \left[ 2 \gk \eta \cdot \frac{\bgk (1 - \agk)}{\agk} \right] \dotprod{\expect{\gg_k} - \nabla f(y_k)}{x_k - y_k} \\
    & \quad \quad + 2 \gk \eta \dotprod{\expect{\gg_k} - \nabla f(y_k)}{x^* - y_k} \nonumber \\
    & \quad \quad + \gk^2 \eta^2 \sigma^2 + 2 L \gk^2 \eta^3 \rho \sigma^2 + 2 \gk^2 \eta^2 \rho \norms{\expect{\gg_k} - \nabla f(y_k)}^2\\
    & \leq \bgk \rk^2 + (1 - \bgk) \normsq{\yk - \xopt} + 4 \gk^2 \eta \rho \E[f(\yk) - f(\xkk)] \\ 
    & \quad \quad + 2 \gk \eta \left[ \frac{\bgk (1 - \agk)}{\agk} \left( f(\xk) - f(\yk) \right) + f^* - f(\yk) - \frac{\mu}{2} \normsq{\yk - \xopt} \right] \\
    & \quad \quad + \left[ 2 \gk \eta \cdot \frac{\bgk (1 - \agk)}{\agk} \right] \dotprod{\expect{\gg_k} - \nabla f(y_k)}{x_k - y_k} \\
    & \quad \quad + 2 \gk \eta \dotprod{\expect{\gg_k} - \nabla f(y_k)}{x^* - y_k} \nonumber \\
    & \quad \quad + 2 \gk^2 \eta^2 \sigma^2 + 2 \gk^2 \eta^2 \rho \norms{\expect{\gg_k} - \nabla f(y_k)}^2 \tag{Since $\eta \leq \frac{1}{\rho L}$} \\
    & = \bgk \rk^2 + \normsq{\yk - \xopt} \left[ (1 - \bgk) - \gk \mu \eta \right] \\
    & \quad \quad + f(\yk) \left[4 \gk^2 \eta \rho - 2 \gk \eta \cdot \frac{\bgk (1 - \agk)}{\agk} - 2 \gk \eta \right] \\ 
    & \quad \quad - 4 \gk^2 \eta \rho \E f(\xkk) + 2 \gk \eta f^* + \left[ 2 \gk \eta \cdot \frac{\bgk (1 - \agk)}{\agk} \right] f(\xk) \\
    &  \quad \quad + \left[ 2 \gk \eta \cdot \frac{\bgk (1 - \agk)}{\agk} \right] \dotprod{\expect{\gg_k} - \nabla f(y_k)}{x_k - y_k} \\
    & \quad \quad + 2 \gk \eta \dotprod{\expect{\gg_k} - \nabla f(y_k)}{x^* - y_k} \nonumber \\
    & \quad \quad +  2 \gk^2 \eta^2 \sigma^2 + 2 \gk^2 \eta^2 \rho \norms{\expect{\gg_k} - \nabla f(y_k)}^2.
\end{align*}

Since $\bgk \geq 1 - \gk \mu \eta$ and $\gk = \frac{1}{2 \rho} \cdot \left( 1 + \frac{\bgk (1 - \agk)}{\agk} \right)$,
\begin{align*}
    \E[\rkk^2] & \leq \bgk \rk^2 - 4 \gk^2 \eta \rho \E f(\xkk) + 2 \gk \eta f^* + \left[ 2 \gk \eta \cdot \frac{\bgk (1 - \agk)}{\agk} \right] f(\xk)\\
    &  \quad \quad + \left[ 2 \gk \eta \cdot \frac{\bgk (1 - \agk)}{\agk} \right] \dotprod{\expect{\gg_k} - \nabla f(y_k)}{x_k - y_k} \\
    & \quad \quad + 2 \gk \eta \dotprod{\expect{\gg_k} - \nabla f(y_k)}{x^* - y_k} \nonumber \\
    & \quad \quad  +  2 \gk^2 \eta^2 \sigma^2 + 2 \gk^2 \eta^2 \rho \norms{\expect{\gg_k} - \nabla f(y_k)}^2.
\end{align*}

Multiplying by $\bkk^2$,
\begin{align*}
    \bkk^2 \E[\rkk^2] & \leq \bkk^2 \bgk \rk^2 - 4 \bkk^2 \gk^2 \eta \rho \E f(\xkk) + 2 \bkk^2 \gk \eta f^* \\
    & \quad \quad + \left[ 2 \bkk^2 \gk \eta \cdot \frac{\bgk (1 - \agk)}{\agk} \right] f(\xk) \\
    &  \quad \quad + \left[ 2 \bkk^2 \gk \eta \cdot \frac{\bgk (1 - \agk)}{\agk} \right] \dotprod{\expect{\gg_k} - \nabla f(y_k)}{x_k - y_k} \\
    & \quad \quad + 2 \bkk^2 \gk \eta \dotprod{\expect{\gg_k} - \nabla f(y_k)}{x^* - y_k} \nonumber \\
    & \quad \quad +  2 \bkk^2 \gk^2 \eta^2 \sigma^2 + 2 \bkk^2 \gk^2 \eta^2 \rho \norms{\expect{\gg_k} - \nabla f(y_k)}^2.
\end{align*}

Since $\bkk^2 \bgk \leq \bk^2$, $\bkk^2 \gk^2 \eta \rho = \akk^2$, $\frac{\gk \eta \bgk (1 - \agk)}{\agk} = \frac{2 \ak^2}{\bkk^2}$
\begin{align*}
    \bkk^2 \E[\rkk^2] & \leq \bk^2 \rk^2 - 4 \akk^2 \E f(\xkk) + 2 \bkk^2 \gk \eta f^* + 4 \ak^2 f(\xk) \\
    &  \quad \quad + 4 \ak^2 \dotprod{\expect{\gg_k} - \nabla f(y_k)}{x_k - y_k} \\
    & \quad \quad + 2 \bkk^2 \gk \eta \dotprod{\expect{\gg_k} - \nabla f(y_k)}{x^* - y_k} \nonumber \\
    & \quad \quad + \frac{2 \akk^2 \sigma^2 \eta}{\rho} + 2 \akk^2 \eta \norms{\expect{\gg_k} - \nabla f(y_k)}^2 \\
    & = \bk^2 \rk^2 - 4 \akk^2 \left[ \E f(\xkk) - f^* \right] + 4 \ak^2 \left[ f(\xk) - f^* \right]  \\
    & \quad \quad + 2 \left[\bkk^2 \gk \eta - 2 \akk^2 + 2 \ak^2 \right] f^*  \\
    &  \quad \quad + 4 \ak^2 \dotprod{\expect{\gg_k} - \nabla f(y_k)}{x_k - y_k} \\
    & \quad \quad + 2 \bkk^2 \gk \eta \dotprod{\expect{\gg_k} - \nabla f(y_k)}{x^* - y_k} \nonumber \\
    & \quad \quad + \frac{2 \akk^2 \sigma^2 \eta}{\rho} + 2 \akk^2 \eta \norms{\expect{\gg_k} - \nabla f(y_k)}^2.
\end{align*}

Since $\left[\bkk^2 \gk \eta - \akk^2 + \ak^2 \right] = 0$,
\begin{align*}
    \bkk^2 \E[\rkk^2] & \leq \bk^2 \rk^2 - 4 \akk^2 \left[ \E f(\xkk) - f^* \right] + 4 \ak^2 \left[ f(\xk) - f^* \right] \\
    &  \quad \quad + 4 \ak^2 \dotprod{\expect{\gg_k} - \nabla f(y_k)}{x_k - x^*} \\
    & \quad \quad + 4 \akk^2 \dotprod{\expect{\gg_k} - \nabla f(y_k)}{x^* - y_k} \nonumber \\
    & \quad \quad + \frac{2 \akk^2 \sigma^2 \eta}{\rho} + 2 \akk^2 \eta \norms{\expect{\gg_k} - \nabla f(y_k)}^2.
\end{align*}

Denoting $\E f(\xkk) - f^*$ as $\Phi_{k+1}$, we obtain
\begin{align*}
    4 \akk^2 \Phi_{k+1} - 4 \ak^2 \Phi_{k} & \overset{\eqref{Ass:Bounded_bias}}{\leq} \bk^2 \rk^2 - \bkk^2 \E[\rkk^2] \\
    &  \quad \quad + 4 \ak^2 \delta \Tilde{R}  - 4 \akk^2 \delta \Tilde{R} \nonumber \\
    & \quad \quad + \frac{2 \akk^2 \sigma^2 \eta}{\rho} + 2 \akk^2 \eta \delta^2,
\end{align*}
where $\tilde{R} = \max_{k} \{ \norms{x_k - x^*},\norms{y_k - x^*} \}$.

By summing over $k$ we obtain:\begin{align*}
    4 \sum_{k=0}^{N-1} \left[ \akk^2 \Phi_{k+1} - \ak^2 \Phi_{k}\right] & \leq \sum_{k=0}^{N-1} \left[ \bk^2 \rk^2 - \bkk^2 \E[\rkk^2] \right] \\
    &  \quad \quad + 4 \sum_{k=0}^{N-1} \left[\ak^2 \delta \Tilde{R}  - \akk^2 \delta \Tilde{R} \right] \nonumber \\
    & \quad \quad + \sum_{k=0}^{N-1} \left[ \frac{2 \akk^2 \sigma^2 \eta}{\rho}\right] + 2 \sum_{k=0}^{N-1} \left[ \akk^2 \eta \delta^2\right].
\end{align*}

Let’s substitute $a_{k+1}^2 = b_{k+1}^2 \gamma_k^2 \eta \rho$:
\begin{align*}
    4 b_{N}^2 \gamma_{N-1}^2 \eta \rho \Phi_{N} &\leq 4 a_0^2 \Phi_0 + b_0^2 r_0^2 - b_N^2 \expect{r_N^2} \\
    & \quad \quad + 4 a_0^2 \delta \Tilde{R} - 4 a_N^2 \delta \Tilde{R} \\
    & \quad \quad + \sum_{k=0}^{N-1} \left[ \frac{2 \akk^2 \sigma^2 \eta}{\rho}\right] + 2 \sum_{k=0}^{N-1} \left[ \akk^2 \eta \delta^2\right].
\end{align*}

Divide the left and right parts by $4 \rho \eta$:
\begin{align*}
    b_{N}^2 \gamma_{N-1}^2 \Phi_{N} & \leq \frac{a_0^2}{\rho \eta} \Phi_0 + \frac{b_0^2 r_0^2}{4 \rho \eta} + \frac{a_0^2 \Tilde{R}}{\rho \eta} \delta + \frac{\eta \sigma^2}{2 \rho} \sum_{k=0}^{N-1} \left[ b_{k+1}^2 \gamma_k^2 \right] + \frac{\eta}{2} \delta^2 \sum_{k=0}^{N-1} \left[ b_{k+1}^2 \gamma_k^2 \right].
\end{align*}

Next, we show that according to~\eqref{eq:gamma-update}-\eqref{eq:b-update} the following relation is correct:
\begin{align*}
\gk^2 - \gk \left[ \frac{1}{2 \rho} -  \mu \eta \gamma^2_{k-1} \right] = \gamma^2_{k-1}
\end{align*}

Namely,
\begin{align}
\gk & \overset{\eqref{eq:gamma-update}}{=} \frac{1}{2 \rho} \left[ 1 + \frac{\bgk (1 - \agk)}{\agk} \right] \nonumber \\
\gk^2 - \frac{\gk}{2\rho} & = \frac{\gk \bgk (1 - \agk)}{2\rho \agk} \nonumber \\
& \overset{\eqref{eq:alpha-update}}{=} \frac{1}{\eta \rho} \frac{\ak^2}{\bkk^2} \nonumber \\
& \overset{\eqref{eq:b-update}}{=} \frac{\bgk}{\eta \rho} \frac{\ak^2}{\bk^2} \nonumber \\
& \overset{\eqref{eq:beta-update}}{=} \frac{1 - \gk \mu \eta}{\eta \rho} \frac{\ak^2}{\bk^2}  \nonumber \\
& \overset{\eqref{eq:a-update}}{=} \frac{1 - \gk \mu \eta}{\eta \rho} \left( \gamma_{k-1} \sqrt{\eta \rho} \right)^2  \nonumber \\
& = \left( 1 - \gk \mu \eta \right) \gamma^2_{k-1} \nonumber  \\
\Rightarrow \quad \quad \gk^2 - \gk \left[ \frac{1}{2 \rho} -  \mu \eta \gamma^2_{k-1} \right] = \gamma^2_{k-1}.  \label{lemma:gamma}
\end{align}

If $\gk = C$, then
\begin{align*}
    \gk & = \frac{1}{\sqrt{2 \mu \eta \rho}} \\
\bgk & = 1 - \sqrt{\frac{\mu \eta}{2 \rho}} \\
\bkk &=  \frac{b_0}{\left( 1 - \sqrt{\frac{\mu \eta}{2 \rho}} \right)^{(k+1)/2}} \\
\akk = \frac{1}{\sqrt{2 \mu \eta \rho}} \cdot \sqrt{\eta \rho} \cdot \frac{b_0}{\left( 1 - \sqrt{\frac{\mu \eta}{2 \rho}} \right)^{(k+1)/2}} &= \frac{b_0}{\sqrt{2 \mu}} \cdot \frac{1}{\left( 1 - \sqrt{\frac{\mu \eta}{2 \rho}} \right)^{(k+1)/2}}. 
\end{align*}

If $b_0 = \sqrt{2 \mu}$,
\begin{align*}
    \akk &= \frac{1}{\left( 1 - \sqrt{\frac{\mu \eta}{2 \rho}} \right)^{(k+1)/2}}.
\end{align*}

The above equation implies that $a_0 = 1$.

Now the above relations allow us to obtain the following inequality:
\begin{align*}
    \frac{2 \mu}{\left( 1 - \sqrt{\frac{\mu \eta}{2 \rho}} \right)^{N}} \frac{1}{2 \mu \eta \rho} \Phi_{N} & \leq \frac{1}{\rho \eta} \Phi_0 + \frac{2 \mu r_0^2}{4 \rho \eta} + \frac{\Tilde{R}}{\rho \eta} \delta \\
    & \quad \quad + \frac{\sigma^2}{\rho^2} \sum_{k=0}^{N-1} \left[ \frac{1}{\left( 1 - \sqrt{\frac{\mu \eta}{2 \rho}} \right)^{(k+1)}} \right] \\
    & \quad \quad + \frac{1}{2 \rho} \delta^2 \sum_{k=0}^{N-1} \left[ \frac{1}{\left( 1 - \sqrt{\frac{\mu \eta}{2 \rho}} \right)^{(k+1)}}  \right];\\
    \frac{1}{\left( 1 - \sqrt{\frac{\mu \eta}{2 \rho}} \right)^{N}} \Phi_{N} & \leq \Phi_0 + \frac{\mu }{2}r_0^2 + \Tilde{R} \delta \\
    & \quad \quad + \frac{\sigma^2 \eta}{\rho} \sum_{k=0}^{N-1} \left[ \frac{1}{\left( 1 - \sqrt{\frac{\mu \eta}{2 \rho}} \right)^{(k+1)}} \right] \\
    & \quad \quad + \frac{\eta}{2} \delta^2 \sum_{k=0}^{N-1} \left[ \frac{1}{\left( 1 - \sqrt{\frac{\mu \eta}{2 \rho}} \right)^{(k+1)}}  \right];\\
    \frac{1}{\left( 1 - \sqrt{\frac{\mu \eta}{2 \rho}} \right)^{N}} \Phi_{N} & \leq \Phi_0 + \frac{\mu }{2}r_0^2 + \Tilde{R} \delta \\
    & \quad \quad + \frac{\sigma^2 \sqrt{2 \eta}}{\sqrt{\rho \mu}} \cdot \frac{1}{\left( 1 - \sqrt{\frac{\mu \eta}{2 \rho}} \right)^{N}} \\
    & \quad \quad + \frac{\sqrt{\eta \rho}}{\sqrt{2 \mu}} \delta^2 \cdot \frac{1}{\left( 1 - \sqrt{\frac{\mu \eta}{2 \rho}} \right)^{(k+1)}};\\
    \expect{f(x_{N})} - f^* & \leq \left( 1 - \sqrt{\frac{\mu \eta}{2 \rho}} \right)^{N} \left[f(x_0) - f^* + \frac{\mu }{2}r_0^2 \right] \\
    & \quad \quad + \left( 1 - \sqrt{\frac{\mu \eta}{2 \rho}} \right)^{N} \Tilde{R} \delta  + \frac{\sigma^2 \sqrt{2 \eta}}{\sqrt{\rho \mu}}  + \frac{\sqrt{\eta \rho}}{\sqrt{2 \mu}} \delta^2;\\
    \expect{f(x_{N})} - f^* & \leq \left( 1 - \sqrt{\frac{\mu}{4 \rho^2 L}} \right)^{N} \left[f(x_0) - f^* + \frac{\mu }{2}\norms{x_0 - x^*}^2 \right] \\
    & \quad \quad + \left( 1 - \sqrt{\frac{\mu}{4 \rho^2 L}} \right)^{N} \Tilde{R} \delta  + \frac{\sigma^2}{\sqrt{\rho^2 \mu L}}  + \frac{1}{\sqrt{4 \mu L}} \delta^2.
\end{align*}

By adding batching, given that $\Tilde{\rho}_{B} = \max \{ 1, \frac{\rho}{B} \}$ and $\sigma^2_B = \frac{\sigma^2}{B}$ we have the convergence rate for accelerated batched SGD with biased gradient oracle and parameter $\eta \lesssim \frac{1}{2 \rho_B L} $:
\begin{align*}
    \expect{f(x_{N})} - f^* & \leq \left( 1 - \sqrt{\frac{\mu}{4 \Tilde{\rho}_B^2 L}} \right)^{N} \left[f(x_0) - f^* + \frac{\mu }{2}\norms{x_0 - x^*}^2 \right] \\
    & \quad \quad + \left( 1 - \sqrt{\frac{\mu}{4 \Tilde{\rho}_B^2 L}} \right)^{N} \Tilde{R} \delta  + \frac{\sigma_B^2}{\sqrt{\Tilde{\rho}_B^2 \mu L}}  + \frac{1}{\sqrt{4 \mu L}} \delta^2.
\end{align*}

\section{Properties of the Kernel Approximation}
In this Section, we extend the explanations for obtaining the bias and second moment estimates of the gradient approximation. 

Using the variational representation of the Euclidean norm, and definition of gradient approximation \eqref{eq:gradient_approx} we can write:
    \begin{align*}
        \norms{\bb(x_k)} & = \norms{\expect{\gg(x_k,\ee)} - \nabla f(x_k)} \\
        &= \norms{\frac{d}{2 h} \expect{ \left( \Tilde{f}(x_k + h r \ee) - \Tilde{f}(x_k - h r \ee) \right) K(r) \ee } - \nabla f(x_k)}
        \nonumber \\
        & \overset{\circledOne}{=} \norms{\frac{d}{h}\expect{ f(x_k + h r \ee) K(r) \ee} - \nabla f(x_k)}
        \nonumber \\
        & \overset{\circledTwo}{=}   \norms{\expect{\nabla f(x_k + h r \uu) r K(r)} - \nabla f(x_k)}
        \nonumber \\
        & = \sup_{z \in S_2^d(1)} \expect{\left( \nabla_z f(x_k + h r \uu) - \nabla_z f(x_k)\right) r K(r)}
        \nonumber \\
        &\!\!\! \! \! \! \! \! \overset{\eqref{Taylor_expansion_1}, \eqref{Taylor_expansion_2}}{\leq} \kappa_\beta h^{\beta-1}\frac{L}{(l-1)!} \expect{\norms{u}^{\beta - 1}}
        \nonumber \\
        & \leq \kappa_\beta h^{\beta-1}\frac{L}{(l-1)!} \frac{d}{d+\beta - 1}
        \nonumber \\
        & \lesssim \kappa_\beta L h^{\beta - 1},
    \end{align*}
    where $u \in B^d(1)$, $\circledOne =$ the equality is obtained from the fact, namely, distribution of $\ee$ is symmetric, $\circledTwo =$ the equality is obtained from a version of Stokes’ theorem~\cite{Zorich_2016}.

    By definition gradient approximation~\eqref{eq:gradient_approx} and Wirtinger-Poincare inequality~\eqref{eq:Wirtinger_Poincare} we have
    \begin{align*}
        \expect{\norms{\gg(x_k,\ee)}^2} & = \frac{d^2}{4 h^2} \expect{\norms{\left(\Tilde{f}(x_k + h r \ee) - \Tilde{f}(x_k - h r \ee)\right) K(r) \ee}^2}
        \nonumber \\
        & = \frac{d^2}{4 h^2} \expect{\left(f(x_k + h r \ee) - f(x_k - h r \ee) + (\xi_1 - \xi_2))\right)^2 K^2(r)} 
        \nonumber \\
        &\! \overset{\eqref{eq:squared_norm_sum}}{\leq} \frac{\kappa d^2}{2 h^2} \left( \expect{\left(f(x_k + h r \ee) - f(x_k - h r \ee)\right)^2} + 2 \Delta^2 \right)
        \nonumber \\
        &\! \! \overset{\eqref{eq:Wirtinger_Poincare}}{\leq} \frac{\kappa d^2}{2 h^2} \left( \frac{h^2}{d} \expect{\norms{ \nabla f(x_k + h r \ee) + \nabla f(x_k - h r \ee)}^2} + 2 \Delta^2 \right) 
        \nonumber \\
        & = \frac{\kappa d^2}{2 h^2} \left( \frac{h^2}{d} \expect{\norms{ \nabla f(x_k + h r \ee) + \nabla f(x_k - h r \ee) \pm 2 \nabla f(x_k)}^2} + 2 \Delta^2 \right)
        \nonumber \\
        & \! \! \overset{\eqref{eq:L_smoothness}}{\leq} \underbrace{4d \kappa}_{\rho} \norms{\nabla f(x_k)}^2 + \underbrace{4 d \kappa  L^2 h^2 + \frac{\kappa d^2 \Delta^2}{h^2}}_{\sigma^2}.
    \end{align*}

\end{document}